\documentclass[reqno]{amsart}
\usepackage{amsmath,amsfonts,amssymb}
\usepackage{color}

\usepackage{enumerate}

\begin{document}

\title[Phase-isometries between normed spaces]{Phase-isometries between normed spaces}

\author{Dijana Ili\v{s}evi\'{c}}

\address{Department of Mathematics, Faculty of Science, University of Zagreb, Croatia}

\email{ilisevic@math.hr}

\author{Matja\v{z} Omladi\v{c}}

\address{Institute of Mathematics, Physics and Mechanics, Jadranska 19, 1000 Ljubljana, Slovenia}

\email{matjaz@omladic.net}

\author{Aleksej Turn\v{s}ek}

\address{Faculty  of Maritime Studies and Transport, University of Ljubljana, Pot pomor\-\v{s}\v{c}akov 4, 6320 Portoro\v{z}, Slovenia and Institute of Mathematics, Physics and Mechanics, Jadranska 19, 1000 Ljubljana, Slovenia}

\email{aleksej.turnsek@fpp.uni-lj.si}
\thanks{Dijana Ili\v{s}evi\'{c} has been fully supported by the Croatian Science Foundation [project number
	IP-2016-06-1046]. 
	 Matja\v{z} Omladi\v{c} was supported in part by the Ministry of Science
	and Education of Slovenia, grant P1-0222.
	Aleksej Turn\v{s}ek was supported in part by the Ministry of Science
and Education of Slovenia, grants J1-8133 and P1-0222.}

%    General info
\subjclass[2010]{39B05, 46C50, 47J05}

%\date{January 1, 1994 and, in revised form, June 22, 1994.}

%\dedicatory{This paper is dedicated to our authors.}

\keywords{Phase-isometry, Wigner's theorem, isometry, real normed space, projective geometry}

\begin{abstract}
Let $X$ and $Y$ be real normed spaces and $f \colon X\to Y$ a surjective mapping. Then $f$ satisfies  $\{\|f(x)+f(y)\|,\|f(x)-f(y)\|\}=\{\|x+y\|,\|x-y\|\}$, $x,y\in X$, if and only if $f$ is phase equivalent to a surjective linear isometry, that is, $f=\sigma U$, where $U \colon X\to Y$ is a surjective linear isometry and $\sigma \colon X\to \{-1,1\}$. This is a Wigner's type result for real normed spaces.
\end{abstract}

\maketitle

\newtheorem{theorem}{Theorem}[section]
\newtheorem{proposition}[theorem]{Proposition}
\newtheorem{lemma}[theorem]{Lemma}
\newtheorem{corollary}[theorem]{Corollary}
\newtheorem{definition}{Definition}
\theoremstyle{definition}
\newtheorem{example}[theorem]{Example}
\newtheorem{xca}[theorem]{Exercise}
\newtheorem{question}{Question}

\theoremstyle{remark}
\newtheorem{remark}[theorem]{Remark}
%$|\!|\!|\cdot|\!|\!|$

\section{Introduction}

Let $(H,(\cdot,\cdot))$ and $(K,(\cdot,\cdot))$ be real or complex inner product spaces and let $f \colon H \to K$ be a mapping.
Then $f$ satisfies
\begin{equation}\label{Wigner}
| (f(x),f(y))|=|(x,y)|,\quad x,y\in H,
\end{equation}
if and only $f$ is phase equivalent to a linear or an anti-linear isometry, say $U$, that is,
\begin{equation}\label{phase}
f(x)=\sigma(x) Ux,\quad x\in H,
\end{equation}
where a so-called phase function $\sigma$ takes values in modulus one scalars.
This is one of many forms of Wigner's theorem, also known as Wigner's unitary-antiunitary theorem.
It has played an important role in mathematical foundations of quantum mechanics.
More details can be found in excellent survey \cite{Ch}, see also \cite{Geher, Gyory, Molnar, Ratz}.

If $H$ and $K$ are real then it is easy to verify that \eqref{Wigner} implies
		\begin{equation}\label{main}
		\{\|f(x)+f(y)\|,\|f(x)-f(y)\|\}=\{\|x+y\|,\|x-y\|\}, \quad x,y\in H.
		\end{equation}
		In \cite[Theorem 2]{Maksa} Maksa and P\'{a}les proved the converse, that is, \eqref{main} implies \eqref{Wigner}. Thus solutions of \eqref{main}, which may be called phase-isometries, are exactly of the form \eqref{phase}. If $X$ and $Y$ are real normed spaces, then it is easy to see that any mapping $f \colon X\to Y$ of the form \eqref{phase} satisfies \eqref{main}. Therefore, it is natural to ask (see \cite[Problem 1]{Maksa})
				under what conditions,  when $X$ and $Y$ are real normed but not necessarily inner product spaces, solutions $f \colon X \to Y$ of \eqref{main} have the form \eqref{phase}?

		There are several recent papers dealing with this problem. See \cite{Huang1, Jia, Li,Zeng} for $L_p$ spaces, Tsirelson space $T$ and $l^p(\Gamma,H)$ spaces. There are also more general results.
		It has been proven in \cite[Theorem 2.8]{Huang2} that \eqref{main} implies \eqref{phase} if $f$ is surjective and $X$ is smooth,
		in \cite[Theorems 9 and 11]{WB} if $f$ is surjective and $\dim{X}=2$, or $f$ is surjective and $X$ is strictly convex. In \cite[Theorem 2.4]{IT} the same implication has been proven without the assumption of surjectivity, assuming only that $Y$ is strictly convex. The aim of this note is to prove the general case, that is, we give a positive answer to the question of Maksa and P\'{a}les. Namely, we prove that \eqref{main} implies \eqref{phase} for surjective mapping $f$, without any assumptions on $X$ and $Y$. Let us mention that the assumption of surjectivity cannot be omitted since e.g.~$f \colon \mathbb{R}\to\mathbb{R}^2$, where $\mathbb{R}^2$ is endowed with the norm $\|(x,y)\|=\max\{|x|,|y|\}$, defined by $f(t)=(t,\sin t)$ satisfies \eqref{main} but it does not satisfy \eqref{phase}.

\section{Preliminaries}
The following result is in fact \cite[Lemma 2.1 and Remark 3.5]{Huang2}.
		We give the proofs for the sake of completeness.
\begin{lemma}\label{lemma preliminaries}
Let X and Y be real normed spaces, and let a surjective $f \colon X \to Y$ satisfy \eqref{main}.
\begin{itemize}
\item[(i)]
Then f is a norm preserving map, injective, and $f(-x) = -f(x)$ for all $x\in X.$
\item[(ii)]
Let $\{x_n\}$ be any sequence in $X$ converging to $x \in X$. Then there is a subsequence $\{x_{n_i}\}$ such that $\{f(x_{n_i})\}$ converges to $f(x)$ or $-f(x)$.
\end{itemize}
\end{lemma}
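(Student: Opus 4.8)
The plan is to extract everything from the single functional equation \eqref{main} by judicious substitutions, invoking surjectivity only at the one point where it is genuinely needed. For part (i), I would first substitute $x=y=0$ into \eqref{main} to get $\{\|2f(0)\|,0\}=\{0\}$, hence $f(0)=0$; then $y=0$ gives $\{\|f(x)\|,\|f(x)\|\}=\{\|x\|\}$, so $f$ is norm preserving. Substituting the pair $(x,-x)$ yields $\{\|f(x)+f(-x)\|,\|f(x)-f(-x)\|\}=\{0,2\|x\|\}$, so for every $x$ either $f(-x)=-f(x)$ or $f(-x)=f(x)$. For injectivity up to sign, suppose $f(x)=f(z)$; then the left-hand sides of \eqref{main} for the pairs $(x,y)$ and $(z,y)$ coincide, so $\{\|x+y\|,\|x-y\|\}=\{\|z+y\|,\|z-y\|\}$ for all $y$, and the choice $y=x$ forces $\{\|x+z\|,\|x-z\|\}=\{2\|x\|,0\}$, whence $z=x$ or $z=-x$.

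The crux, and the step where surjectivity is essential, is ruling out $f(-x)=f(x)$ for $x\neq0$; note that \eqref{main} is insensitive to the sign of either argument, so this cannot be settled by substitution alone. Suppose $f(x_0)=f(-x_0)=:w$ with $x_0\neq0$, so $w\neq0$ since $\|w\|=\|x_0\|$. By surjectivity choose $u$ with $f(u)=-w$, and apply \eqref{main} to the pair $(x_0,u)$: the left-hand side is $\{\|w-w\|,\|w+w\|\}=\{0,2\|w\|\}$, so $\{\|x_0+u\|,\|x_0-u\|\}=\{0,2\|x_0\|\}$ and hence $u=\pm x_0$. Either choice gives $f(u)=w$, contradicting $f(u)=-w\neq0$. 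Thus $f(-x)=-f(x)$ for all $x$; combined with the previous paragraph, $f(x)=f(z)$ forces $z=x$ (the alternative $z=-x$ would give $f(x)=-f(x)=0$), so $f$ is injective.

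For part (ii) I would avoid any compactness argument, since $Y$ need not be finite-dimensional and norm preservation yields only boundedness, and instead read convergence directly off \eqref{main}. Applying it to the pair $(x_n,x)$ gives $\{\|f(x_n)+f(x)\|,\|f(x_n)-f(x)\|\}=\{\|x_n+x\|,\|x_n-x\|\}$, and since $\|x_n-x\|\to0$ the smaller of the two right-hand norms tends to $0$; as the two sides are equal as sets, $\min\{\|f(x_n)+f(x)\|,\|f(x_n)-f(x)\|\}\to0$. Splitting $\mathbb{N}$ into the indices where $\|f(x_n)-f(x)\|$ realizes this minimum and those where $\|f(x_n)+f(x)\|$ does, at least one index set is infinite, and along it $f(x_{n_i})\to f(x)$ or $f(x_{n_i})\to -f(x)$ respectively, which is the desired subsequence.

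I expect the main obstacle to be the sign step isolated in the second paragraph: everything else is a mechanical consequence of \eqref{main}, but distinguishing $f(-x)=-f(x)$ from $f(-x)=f(x)$ is exactly where the hypothesis on $f$ must do real work, and the clean resolution is the surjective choice of a preimage of $-w$.
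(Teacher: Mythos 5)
Your proof is correct and follows essentially the same route as the paper: norm preservation from the substitution $y=x$ (or $y=0$), surjectivity used exactly once to produce a preimage of $-f(x)$ and force oddness, injectivity from the resulting dichotomy, and for (ii) a pigeonhole split of the indices according to which sign makes $f(x_n)\mp f(x)$ small. If anything, your version of (ii), which splits by which term realizes $\min\{\|f(x_n)-f(x)\|,\|f(x_n)+f(x)\|\}$ rather than by an $\varepsilon$-dependent condition, is slightly tidier than the paper's.
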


\begin{proof}
	
(i): For $y=x$ we have $\{2\|f(x)\|,0\}=\{2\|x\|,0\}$, therefore $f$ is norm preserving.

Let $x \in X$ be nonzero. Since $f$ is surjective, there exists $y \in X$ such that $f(y)=-f(x)$.
Then we have $\{0,2\|f(x)\|\}=\{\|x+y\|,\|x-y\|\}$, hence $y=x$ or $y=-x$.
If $y=x$ then $f(x)=f(y)=-f(x)$, which first implies $f(x)=0$, and then,
since $f$ is norm preserving,
 $x=0$, which contradicts the assumption $x \neq 0$.
Therefore, $y=-x$.
In other words, $f(-x)=-f(x)$ for all $x \in X$.

If $x,y \in X$ are such that $f(x)=f(y)$ then we have $\{2\|f(x)\|,0\}=\{\|x+y\|,\|x-y\|\}$.
Then $y=-x$ or $y=x$.
If $y=-x \neq 0$ then $f(x)=f(y)=f(-x)=-f(x)$, therefore $f(x)=0$, which implies $x=0$; a contradiction.
Hence, $f$ is injective.

(ii): Let $\varepsilon > 0$.
Let $I=\{x_n \, : \, \Vert f(x_n)-f(x) \Vert < \varepsilon\}$ and $J=\{x_n \, : \, \Vert f(x_n)+f(x) \Vert < \varepsilon\}$.
There exists $n_0 \in \mathbb{N}$ such that for every $n > n_0$ we have $\Vert x_n-x \Vert < \varepsilon$.
Then for every $n > n_0$ we also have $\Vert f(x_n)-f(x) \Vert < \varepsilon$ or  $\Vert f(x_n)+f(x) \Vert < \varepsilon$, that is,
$x_n \in I$ or $x_n \in J$.
At least one of these sets, $I$ and $J$, is infinite.
Suppose that $I$ is infinite.
Let $p \colon \mathbb{N} \to \mathbb{N}$ be strictly increasing such that $x \circ p \colon \mathbb{N} \to I$.
Set $n_i=p(i)$.
Then $\{x_{n_i}\}$ is a subsequence of $\{x_n\}$ such that $\{f(x_{n_i})\}$ converges to $f(x)$.
If $J$ is infinite we analogously conclude that there is a subsequence $\{x_{n_i}\}$ such that $\{f(x_{n_i})\}$ converges to $-f(x)$.
\end{proof}

\section{ The elaboration of specifics }
In what follows $f$ will always denote a surjective map from a real normed space $X$ to a real normed space $Y$ satisfying \eqref{main}.
For $x,y\in S$, where $S \in \{X,Y\}$, let
$$H_{x,y}=\{u\in S: \|u-x\|=\|u-y\|=\tfrac{1}{2}\|x-y\|\}.$$
Then $\frac{1}{2}(x+y)\in H_{x,y}$,  $H_{-x,-y}=-H_{x,y}$, and $H_{-x,y}=-H_{x,-y}$.
Let $x,y \in X$ and choose
$$u\in H_{x,y}\cup H_{-x,-y}\cup H_{-x,y}\cup H_{x,-y}.$$
Then for $f(u)$, $f(x)$ and $f(y)$ we have eight possibilities:\\[2mm]
(a) $\|f(u)-f(x)\|=\|f(u)-f(y)\|=\frac{1}{2}\|f(x)-f(y)\|.$\\
(b) $\|f(u)-f(x)\|=\|f(u)-f(y)\|=\frac{1}{2}\|f(x)+f(y)\|.$\\
(c) $\|f(u)+f(x)\|=\|f(u)-f(y)\|=\frac{1}{2}\|f(x)-f(y)\|.$\\
(d) $\|f(u)+f(x)\|=\|f(u)-f(y)\|=\frac{1}{2}\|f(x)+f(y)\|.$\\
(e) $\|f(u)-f(x)\|=\|f(u)+f(y)\|=\frac{1}{2}\|f(x)-f(y)\|.$\\
(f)  $\|f(u)-f(x)\|=\|f(u)+f(y)\|=\frac{1}{2}\|f(x)+f(y)\|.$\\
(g) $\|f(u)+f(x)\|=\|f(u)+f(y)\|=\frac{1}{2}\|f(x)-f(y)\|.$\\
(h) $\|f(u)+f(x)\|=\|f(u)+f(y)\|=\frac{1}{2}\|f(x)+f(y)\|.$\\

\begin{remark}\label{allprop}
	
		Let $x,y \in X$.
		
(i)
If $u \in H_{x,y} \cup H_{-x,-y}$ then (a), (c), (e) and (g) can happen if $\Vert f(x)-f(y) \Vert = \Vert x-y \Vert$,
and (b), (d), (f) and (h) can happen if $\Vert f(x)-f(y) \Vert = \Vert x+y \Vert$.

(ii)
Let $u \in H_{x,y} \cup H_{-x,-y}$.
If (a) holds then $f(u) \in H_{f(x), f(y)}$, if (d) holds then $f(u) \in H_{-f(x), f(y)}$, if (f) holds then $f(u) \in H_{f(x), -f(y)}$, and
if (g) holds then $f(u) \in H_{-f(x), -f(y)}$.

(iii)
By the triangle inequality, in cases (b) and (h) we get $\Vert f(x)-f(y) \Vert \leq \Vert f(x)+f(y) \Vert$. Indeed,
$$\|f(x)-f(y)\|=\|(f(u) \pm f(x))-(f(u) \pm f(y))\|\leq\|f(x)+f(y)\|.$$
Similarly, in cases (c) and (e) we get
$\Vert f(x)+f(y) \Vert \leq \Vert f(x)-f(y) \Vert$.
\end{remark}

\begin{proposition}\label{sets1}
Let $x,y\in X$.
\begin{itemize}
\item[(i)] If $\|x-y\|<\|x+y\|$ and $\|f(x)-f(y)\|=\|x-y\|$, then
$$f(H_{x,y}\cup H_{-x,-y})=H_{f(x),f(y)}\cup H_{-f(x),-f(y)}.$$
\item[(ii)] If $\|x-y\|<\|x+y\|$ and $\|f(x)-f(y)\|=\|x+y\|$, then
$$f(H_{x,y}\cup H_{-x,-y})=H_{-f(x),f(y)}\cup H_{f(x),-f(y)}.$$
\item[(iii)]  $f(H_{x,0} \cup H_{-x,0})=H_{f(x),0} \cup H_{-f(x),0}$.
%If $\|x-y\|=\|x+y\|$, then
%$$f(H_{x,y}\cup H_{-x,-y}\cup H_{-x,y}\cup H_{x,-y})=H_{f(x),f(y)}\cup H_{-f(x),-f(y)}\cup H_{-f(x),f(y)}\cup H_{f(x),-f(y)}.$$
\end{itemize}
\end{proposition}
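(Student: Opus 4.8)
The plan is to prove each of the three set equalities by establishing two inclusions: a \emph{forward} inclusion coming directly from the eight-case analysis recorded in Remark~\ref{allprop}, and a \emph{reverse} inclusion obtained by applying the forward inclusion to the inverse map. The single observation driving every reverse inclusion is that $f^{-1}$ is again a surjective solution of \eqref{main}: since $f$ is a bijection by Lemma~\ref{lemma preliminaries}(i), substituting $x=f^{-1}(p)$, $y=f^{-1}(q)$ into \eqref{main} shows that $f^{-1}$ satisfies \eqref{main} as well. Thus once the forward inclusions are proved for an arbitrary surjective phase-isometry, the reverse inclusions follow by specializing to $f^{-1}$, with no circularity.

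For part (i), I would first note that $\|x-y\|<\|x+y\|$ together with $\|f(x)-f(y)\|=\|x-y\|$ forces $\|f(x)+f(y)\|=\|x+y\|$, hence the strict inequality $\|f(x)-f(y)\|<\|f(x)+f(y)\|$. Fixing $u\in H_{x,y}\cup H_{-x,-y}$, Remark~\ref{allprop}(i) leaves only cases (a), (c), (e), (g); cases (c) and (e) would yield $\|f(x)+f(y)\|\le\|f(x)-f(y)\|$ by Remark~\ref{allprop}(iii), contradicting the strict inequality, so only (a) and (g) survive. Remark~\ref{allprop}(ii) then places $f(u)$ in $H_{f(x),f(y)}$ (case (a)) or $H_{-f(x),-f(y)}$ (case (g)), which is the forward inclusion. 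For the reverse inclusion I apply this forward statement to $f^{-1}$ with data $p=f(x)$, $q=f(y)$: since $\|p-q\|=\|x-y\|<\|x+y\|=\|p+q\|$ and $\|f^{-1}(p)-f^{-1}(q)\|=\|x-y\|=\|p-q\|$, the hypotheses transfer verbatim, giving $f^{-1}(H_{f(x),f(y)}\cup H_{-f(x),-f(y)})\subseteq H_{x,y}\cup H_{-x,-y}$, and applying $f$ yields the claim.

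Part (ii) runs identically with sum and difference interchanged: the hypotheses now give $\|f(x)+f(y)\|=\|x-y\|<\|x+y\|=\|f(x)-f(y)\|$, so Remark~\ref{allprop}(i) leaves cases (b), (d), (f), (h), Remark~\ref{allprop}(iii) eliminates (b) and (h), and cases (d), (f) send $f(u)$ into $H_{-f(x),f(y)}$ and $H_{f(x),-f(y)}$. For the reverse inclusion the correct data is $p=-f(x)$, $q=f(y)$, so that $H_{p,q}\cup H_{-p,-q}$ is exactly the target set; using $f(-x)=-f(x)$ one checks that $f^{-1}$ with $(p,q)$ satisfies the hypotheses of part (ii) \emph{again} (not part (i)), and the forward inclusion of (ii) applied to $f^{-1}$ closes the argument. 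Part (iii) must be treated separately, since $y=0$ gives $\|x-y\|=\|x+y\|=\|x\|$ and the strict dichotomy of (i)--(ii) is unavailable. Here I argue directly: for $u\in H_{x,0}$ one has $f(0)=0$ and $\|f(u)\|=\|u\|=\tfrac12\|x\|=\tfrac12\|f(x)\|$, while \eqref{main} for $(u,x)$ gives $\{\|f(u)+f(x)\|,\|f(u)-f(x)\|\}=\{\|u+x\|,\tfrac12\|x\|\}$; therefore one of $\|f(u)\pm f(x)\|$ equals $\tfrac12\|f(x)\|=\|f(u)\|$, placing $f(u)$ in $H_{f(x),0}$ or $H_{-f(x),0}$. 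The case $u\in H_{-x,0}$ follows from $f(-u)=-f(u)$, and the reverse inclusion comes once more from $f^{-1}$.

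The main obstacle I anticipate is bookkeeping rather than ingenuity: in parts (i) and (ii) one must track precisely which of the eight cases survives and manage the sign conventions when transferring hypotheses to $f^{-1}$ — in particular, recognizing that the reverse inclusion of (ii) needs part (ii) applied to the negated point $-f(x)$, not part (i). Beyond that, the argument reduces entirely to the triangle-inequality observations already packaged in Remark~\ref{allprop} and to the clean, non-circular passage from forward to reverse inclusions via the inverse phase-isometry, which is exactly where surjectivity is used.
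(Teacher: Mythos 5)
Your proof is correct and takes essentially the same route as the paper's: the forward inclusions come from eliminating cases via Remark~\ref{allprop}(i) and (iii) and locating $f(u)$ via Remark~\ref{allprop}(ii), and the reverse inclusions come from applying the forward statement to $f^{-1}$, which is again a surjective phase-isometry. The only difference is that you spell out the sign bookkeeping for the reverse inclusion of (ii) (applying part (ii) to $f^{-1}$ at the points $-f(x)$ and $f(y)$) and give a direct computation for (iii), both of which are correct elaborations of steps the paper leaves implicit.
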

\begin{proof}
	
(i): By Remark \ref{allprop}(i), we have that (a), (c), (e) or (g) holds. However, Remark \ref{allprop}(iii) implies that (c) and (e) cannot hold. Thus, only (a) and (g) can happen.
Then for  $u \in H_{x,y} \cup H_{-x,-y}$ we have $f(u) \in H_{f(x),f(y)}\cup H_{-f(x),-f(y)}$ by Remark \ref{allprop}(ii).
Therefore, $f(H_{x,y}\cup H_{-x,-y}) \subseteq H_{f(x),f(y)}\cup H_{-f(x),-f(y)}$.
Since $f$ is bijective, we may and do replace $f$ in these considerations by $f^{-1}$ to get the reverse inclusion.

(ii): Analogously as in (i) we conclude that only (d) and (f) can happen. Hence, $f(H_{x,y}\cup H_{-x,-y}) \subseteq H_{-f(x),f(y)}\cup H_{f(x),-f(y)}$.
The desired conclusion follows after applying this inclusion to $f^{-1}$ as above.
%(iii): Because of $\|x-y\|=\|x+y\|$ we have (a)=(b), (c)=(d), (e)=(f) and (g)=(h) in the above %eight cases.
%Remark \ref{allprop}(ii) implies
%$$f(H_{x,y}\cup H_{-x,-y}\cup H_{-x,y}\cup H_{x,-y})\subseteq H_{f(x),f(y)}\cup H_{-f(x),-f(y)}\cup H_{-f(x),f(y)}\cup H_{f(x),-f(y)}.$$
%The reverse inclusion is obtained in the same way thanks to the fact that $f$ is bijective.

(iii): If $y=0$ then eight cases (a)--(h) are reduced to two, thus
		$f(H_{x,0}\cup H_{-x,0})\subseteq H_{f(x),0}\cup H_{-f(x),0}$.
		Replacing $x$ with $f(x)$ and $f$ with $f^{-1}$ we get the reversed inclusion.
\end{proof}

%Proposition \ref{sets1} and Lemma \ref{singleton} imply the following result.

%\begin{corollary}\label{extreme points}
%Let $x,y\in X$.
%\begin{enumerate}[(A)]
%  \item Let $\|x-y\|<\|x+y\|$ and let $x-y$ be an extreme point. Then we have:
%  \begin{itemize}
%\item[(i)] If  $\|f(x)-f(y)\|=\|x-y\|$, then $f(\tfrac{1}{2}(x+y))\in\{\pm\tfrac{1}{2}( f(x)+f(y))\}$.
%\item[(ii)] If $\|f(x)-f(y)\|=\|x+y\|$, then $f(\tfrac{1}{2}(x+y))\in\{\pm\tfrac{1}{2}( f(x)-f(y))\}$.
%\item[(iii)]$f(\tfrac{1}{2}(x+y))\in\{\tfrac{1}{2}(\pm f(x)\pm f(y))\}$.
%    \end{itemize}
 % \item Let $\|x-y\|=\|x+y\|$ and let both $x-y$ and $x+y$ be extreme points. Then $\{f(\frac{1}{2}(x+y)),f(\frac{1}{2}(x-y))\}\subseteq \{\frac{1}{2}(\pm f(x)\pm f(y))\}$.
%\end{enumerate}
%\end{corollary}

\medskip

Let $x,y\in X$ and  let  $G_1=H_{x,y}\cup H_{-x,-y}$.
For $n=2,3,\ldots$, define
$$G_n=\{u\in G_{n-1}: \min\{\|u-v\|,\|u+v\|\}\leq \tfrac{1}{2}d_{n-1}\; \text{for all}\; v\in G_{n-1}\}.$$
Here
$$d_{n-1}=\sup_{u,v\in G_{n-1}}\min\{\|u-v\|,\|u+v\|\},\quad n=2,3,\ldots.$$
Clearly $d_1$ is finite and from $d_n\leq\frac{1}{2}d_{n-1}$ it follows that $d_n\to0$ when $n\to\infty$. Hence the intersection of the nested sets $G_1\supseteq G_2\supseteq\ldots$ is either zero or consists of exactly two points, say $z$ and $-z$.

\begin{proposition}\label{center}
Let $x,y\in X$ be such that $\|u+v\|\geq \|u-v\|$ for all $u,v\in H_{x,y}$. Then the intersection of the sets $G_n$ is $\{z,-z\}$, where $z=\frac{1}{2}(x+y)$.
\end{proposition}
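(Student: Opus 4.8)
The plan is to pin down the common point $z$ by showing that $m:=\tfrac12(x+y)$ lies in \emph{every} $G_n$, and hence so does $-m$. The preamble already guarantees that $\bigcap_n G_n$ is either a single point or a pair $\{z,-z\}$, so once $m\in\bigcap_n G_n$ is established we get $m\in\{z,-z\}$, which forces $\{z,-z\}=\{m,-m\}$ and thus $z=\tfrac12(x+y)$ up to sign. Everything therefore reduces to proving $m\in G_n$ for all $n$.

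Two structural facts will drive the argument. First, a one-line computation shows that $H_{x,y}$ is centrally symmetric about $m$: if $v\in H_{x,y}$ then $2m-v=x+y-v$ satisfies $\|(2m-v)-x\|=\|y-v\|=\tfrac12\|x-y\|$ and $\|(2m-v)-y\|=\|x-v\|=\tfrac12\|x-y\|$, so $2m-v\in H_{x,y}$. Second, the hypothesis $\|u+v\|\geq\|u-v\|$ on $H_{x,y}$ lets me replace each ``projective'' distance $\min\{\|u-v\|,\|u+v\|\}$ in the construction by an ordinary distance between representatives in $H_{x,y}$: for $u,v\in H_{x,y}$ one has $\min\{\|u-v\|,\|u+v\|\}=\|u-v\|$, and for $u\in H_{x,y}$ and $v=-v'\in -H_{x,y}$ one has $\min\{\|u-v\|,\|u+v\|\}=\|u-v'\|$ with $v'\in H_{x,y}$.

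Set $L_n:=G_n\cap H_{x,y}$. Since $G_n=-G_n$ and $G_n\subseteq H_{x,y}\cup(-H_{x,y})$, I get $G_n=L_n\cup(-L_n)$, and the distance reduction turns membership into an ordinary half-diameter center condition on $H_{x,y}$: for $u\in H_{x,y}$, one has $u\in L_n$ iff $u\in L_{n-1}$ and $\|u-w\|\leq\tfrac12 d_{n-1}$ for all $w\in L_{n-1}$; moreover $\mathrm{diam}(L_{n-1})\leq d_{n-1}$, since each $\|u-w\|=\min\{\|u-w\|,\|u+w\|\}\leq d_{n-1}$. The core is then an induction proving that for every $n$, $m\in L_n$ and $L_n$ is again centrally symmetric about $m$. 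The base case is $L_1=H_{x,y}$, covered by the first fact. In the inductive step, central symmetry of $L_{n-1}$ gives, for each $w\in L_{n-1}$, its reflection $2m-w\in L_{n-1}$, so $2\|w-m\|=\|w-(2m-w)\|\leq\mathrm{diam}(L_{n-1})\leq d_{n-1}$, i.e. $\|m-w\|\leq\tfrac12 d_{n-1}$, whence $m\in L_n$; symmetry of $L_n$ then follows from the identity $\|(2m-u)-w\|=\|u-(2m-w)\|$ applied with $2m-w\in L_{n-1}$, which shows that $2m-u$ meets the center condition whenever $u$ does.

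I expect the main obstacle to be the interplay of the two facts, not any single computation. The hypothesis is exactly what is needed to certify that the iteration $G_n$ is genuinely the iterated half-diameter center construction on $(H_{x,y},\|\cdot\|)$, undistorted by the identification $u\sim-u$; only once this is secured does the central symmetry about $m$ survive each step and pin the limit to $m$. It is worth noting why a cheaper estimate fails: one always has $\|m-w\|\leq\tfrac12\|x-y\|$ for $w\in H_{x,y}$, but this is useless once $d_{n-1}<\|x-y\|$, and indeed $d_n\to0$, so the symmetry-propagation argument is unavoidable. The remaining steps are routine: $-m\in L_n$ by $G_n=-G_n$, so $\{m,-m\}\subseteq\bigcap_n G_n$, forcing $z=\pm\tfrac12(x+y)$ and completing the proof.
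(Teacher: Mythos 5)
Your proof is correct and takes essentially the same route as the paper's: the reflection $u\mapsto x+y-u$ about the midpoint, the reduction of $\min\{\|u-v\|,\|u+v\|\}$ to a genuine distance on $H_{x,y}$ via the hypothesis, and the induction that propagates central symmetry so as to bound $\|m-w\|$ by $\tfrac12 d_{n-1}$ all appear in the paper's argument. The only difference is organizational: you package everything through $L_n=G_n\cap H_{x,y}$ and a single simultaneous induction, whereas the paper runs the symmetry-propagation induction and the membership induction for $z$ one after the other, treating $G_n\cap H_{x,y}$ and $G_n\cap(-H_{x,y})$ separately.
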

\begin{proof}
%Let $\varphi$ be a support functional at $x$ (a norm-one linear functional in $X^*$ such that $\varphi(x)=\Vert x \Vert$).
%So, for $s=ax$ and $t=bx$  we have $\varphi(s)=\|s\|$, $\varphi(t)=\|t\|$ and $\varphi(s+t)=\|s+t\|$. Take $u\in H_{s,t}$. Then $\varphi(u-s) \leq\|u-s\|=\frac{1}{2}\|s-t\|$ and $\varphi(t-u) \leq\|t-u\|=\frac{1}{2}\|s-t\|$. But $\varphi(u-s)+\varphi(t-u)=\varphi(t-s)=\|t-s\|$, hence $\varphi(u-s)=\varphi(t-u)$. From this we get $\varphi(u)=\frac{1}{2}\varphi(s+t)=\frac{1}{2}\|s+t\|$ for all $u\in H_{s,t}$.
Since $u\in H_{x,y}$ if and only if $-u\in -H_{x,y}=H_{-x,-y}$
%Now we can compute the distance between the sets $H_{s,t}$ and $H_{-s,-t}=-H_{s,t}$.
%If $u\in H_{s,t}$ and $v\in -H_{s,t}$, then $-v\in H_{s,t}$ and
%$$\|u-v\|\geq\varphi(u-v)=\varphi(u)+\varphi(-v)=\varphi(s+t)=\|s+t\|.$$
%If $u,v\in H_{s,t}$, then
%$$\|u-v\|=\|(u-s)-(v-s)\|\leq\|u-s\|+\|v-s\|=\|s-t\|.$$
%Since $\|s-t\|\leq\|s+t\|$ 
we conclude that
\begin{equation}\label{eq min}
\min\{\|u-v\|,\|u+v\|\}=
\begin{cases}
\|u-v\| & \text{if}\quad u,v \in H_{x,y} \,\text{ or }\, u,v\in -H_{x,y}\\
\|u+v\| & \text{if}\quad u\in H_{x,y}, v\in -H_{x,y}\, \text{ or }\, u\in -H_{x,y}, v\in H_{x,y}.
\end{cases}
\end{equation}
For any $u\in H_{x,y}$ let $\tilde{u}=x+y-u$. For any $u\in -H_{x,y}$ let $\overline{u}=-x-y-u$. If $u\in H_{x,y}$, then  $\tilde{u}=x+y-u\in H_{x,y}$ as well. Indeed, this follows from
$$\tilde{u}-x=y-u\quad\text{and}\quad \tilde{u}-y=x-u.$$
Thus, $u \in G_1 \cap H_{x,y}=H_{x,y}$ implies $\tilde{u} \in G_1 \cap H_{x,y}=H_{x,y}$.
Similarly, from the fact that $u\in -H_{x,y}$ implies $\overline{u}\in -H_{x,y}$, we infer that $\overline{u}\in G_1\cap -H_{x,y}$ whenever $u\in G_1\cap -H_{x,y}$.

Assume {inductively} that $\tilde{u}\in G_{n-1}\cap H_{x,y}$ whenever $u\in G_{n-1}\cap H_{x,y}$ and let $u\in G_n\cap H_{x,y}$. If $v\in G_{n-1}\cap H_{x,y}$, {we conclude from \eqref{eq min} that}
\begin{align*}
\min \{\|\tilde{u}-v\|,\|\tilde{u}+v\|\}&=\|\tilde{u}-v\|=\|x+y-u-v\|=\|\tilde{v}-u\|\\
&=\min\{\|\tilde{v}-u\|,\|\tilde{v}+u\|\}\leq\tfrac{1}{2}d_{n-1}.
\end{align*}
In case that $v\in G_{n-1}\cap -H_{x,y}$, we use \eqref{eq min} again to see that
\begin{align*}
\min \{\|\tilde{u}-v\|,\|\tilde{u}+v\|\}&=\|\tilde{u}+v\|=\|x+y-u+v\|=\|-\overline{v}-u\|\\
&=\min\{\|\overline{v}+u\|,\|\overline{v}-u\|\}\leq\tfrac{1}{2}d_{n-1}.
\end{align*}
Therefore $\tilde{u}\in G_n\cap H_{x,y}$ as well.
In the same way we show that $u\in G_n\cap -H_{x,y}$ implies $\overline{u}\in G_n\cap -H_{x,y}$ for every positive integer $n$.

Next we show by induction that $z=\frac{1}{2}(x+y)\in G_n\cap H_{x,y}$ for each $n$. First we see that $z\in G_1\cap H_{x,y}$ since $z-x=\frac{1}{2}(y-x)$ and $z-y=\frac{1}{2}(x-y)$. Assume that $z\in G_{n-1}\cap H_{x,y}$ and $u\in G_{n-1}$. If $u\in G_{n-1}\cap H_{x,y}$, then $\tilde{u}\in G_{n-1}\cap H_{x,y}$ by what we proved earlier and by \eqref{eq min} we have
\begin{align*}
\min \{\|z-u\|,\|z+u\|\}&=\|z-u\|=\tfrac{1}{2}\|x+y-2u\|=\tfrac{1}{2}\|\tilde{u}-u\|\\
&=\tfrac{1}{2}\min\{\|\tilde{u}-u\|,\|\tilde{u}+u\|\}\leq\tfrac{1}{2}d_{n-1}.
\end{align*}
If $u\in G_{n-1}\cap -H_{x,y}$, then $\overline{u}\in G_{n-1}\cap -H_{x,y}$ and again using \eqref{eq min} we get
\begin{align*}
\min \{\|z-u\|,\|z+u\|\}&=\|z+u\|=\tfrac{1}{2}\|x+y+2u\|=\tfrac{1}{2}\|u-\overline{u}\|\\
&=\tfrac{1}{2}\min\{\|u-\overline{u}\|,\|u+\overline{u}\|\}\leq\tfrac{1}{2}d_{n-1}.
\end{align*}
Hence $z\in G_n\cap H_{x,y}$. The conclusion is that $z\in\cap_1^\infty G_n$ and then also $-z\in\cap_1^\infty G_n$. The proof is complete.
\end{proof}

		\begin{lemma}\label{important}
		Let $x,y\in X$.
			\begin{itemize}
				\item[(i)] If $x$ is nonzero and $0\leq a<b$ are real numbers, then for $s=ax$ and $t=bx$ we have $\|u+v\|\geq\|u-v\|$ for all $u,v\in H_{s,t}$.
				\item[(ii)] Let $x,y$ be such that $\|x+y\|\geq 2\|x-y\|$. Then $\|u+v\|\geq\|u-v\|$ for all $u,v\in H_{x,y}$.
			\end{itemize}
		\end{lemma}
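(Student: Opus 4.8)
The plan is to prove both parts by comparing an upper bound for $\|u-v\|$ with a lower bound for $\|u+v\|$. Throughout write $r$ for the common radius, so that every $u\in H_{p,q}$ satisfies $\|u-p\|=\|u-q\|=r=\tfrac12\|p-q\|$, where $(p,q)=(s,t)$ in part (i) and $(p,q)=(x,y)$ in part (ii). The upper bound is the same in both cases: for $u,v\in H_{p,q}$ the triangle inequality through $p$ gives $\|u-v\|\le\|u-p\|+\|p-v\|=2r=\|p-q\|$. It remains to produce, in each part, a lower bound for $\|u+v\|$ that is at least $\|p-q\|$.

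For part (i) I would exploit that $s=ax$ and $t=bx$ are collinear with the origin. First I would record the following observation: if $\phi$ is a norm-one functional with $\phi(q-p)=\|q-p\|$, then $\phi$ is constant on $H_{p,q}$, with value $\tfrac12(\phi(p)+\phi(q))$. Indeed, for $u\in H_{p,q}$ the estimates $\phi(q-u)\le\|q-u\|=r$ and $\phi(u-p)\le\|u-p\|=r$ must both be equalities, since their sum is $\phi(q-p)=2r$; hence $\phi(u)=\phi(p)+r=\tfrac12(\phi(p)+\phi(q))$. Now choose, by Hahn--Banach, a norm-one functional $\phi$ with $\phi(x)=\|x\|$. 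Since $b>a$ we get $\phi(t-s)=(b-a)\|x\|=\|t-s\|$, so $\phi$ norms $t-s$ and the observation applies: $\phi(u)=\phi(v)=\tfrac12(\phi(s)+\phi(t))=\tfrac{a+b}{2}\|x\|$ for all $u,v\in H_{s,t}$. Consequently $\|u+v\|\ge\phi(u+v)=(a+b)\|x\|$. Because $a\ge 0$, this is at least $(b-a)\|x\|=\|s-t\|\ge\|u-v\|$, which is the desired inequality.

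For part (ii) the collinearity is gone, but the stronger hypothesis $\|x+y\|\ge 2\|x-y\|$ lets a direct reverse-triangle estimate succeed. Put $m=\tfrac12(x+y)$ and, for $u\in H_{x,y}$, note $u-m=\tfrac12\bigl((u-x)+(u-y)\bigr)$, so that $\|u-m\|\le r$. Writing $u+v=2m+(u-m)+(v-m)$ and applying the reverse triangle inequality gives $\|u+v\|\ge 2\|m\|-\|(u-m)+(v-m)\|\ge\|x+y\|-2r$. By the hypothesis $\|x+y\|\ge 2\|x-y\|=4r$, so $\|u+v\|\ge 4r-2r=2r=\|x-y\|\ge\|u-v\|$, as required.

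The main obstacle is the lower bound on $\|u+v\|$ in part (i). The crude reverse-triangle bound used in part (ii), namely $\|u+v\|\ge 2\|m\|-2r$, only yields $\|u+v\|\ge 2a\|x\|$ here, which fails to dominate $\|u-v\|$ once $b>3a$ (in particular when $a=0$). The point is that collinearity of $s$ and $t$ with the origin is exactly what makes a single norming functional of $x$ simultaneously norm $t-s$ and be constant on $H_{s,t}$, giving the sharp value $\|u+v\|\ge(a+b)\|x\|$ with no loss; isolating and proving that ``constant on $H_{p,q}$'' observation is the crux.
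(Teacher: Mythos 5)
Your proof is correct. Part (i) is essentially the paper's argument: both take a norm-one support functional $\varphi$ at $x$, show that $\varphi$ is constant on $H_{s,t}$ with value $\tfrac12\varphi(s+t)$, and then compare $\|u+v\|\ge\varphi(u+v)=(a+b)\|x\|$ with $\|u-v\|\le\|s-t\|=(b-a)\|x\|$, using $a\ge0$. In part (ii) you arrive at the same key estimate $\|u+v\|\ge\|x+y\|-\|x-y\|$ as the paper, but by a more direct route: you expand $u+v$ around the midpoint $m=\tfrac12(x+y)$ and use only the triangle inequality, whereas the paper translates $u,v$ into the set $H_{x-y,0}$ and invokes part (i) there to justify the intermediate inequality $\|(x-u)+(v-y)\|\ge\|(x-u)-(v-y)\|$. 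Your version shows that this detour is dispensable---$\|(x+y)-(u+v)\|\le\|x-u\|+\|y-v\|=\|x-y\|$ already follows from the triangle inequality alone---so in your write-up (ii) is logically independent of (i); the statements proved and the final estimates are otherwise identical.
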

	
		\begin{proof}
			(i): Let $\varphi$ be a support functional at $x$ (a norm-one linear functional in $X^*$ such that $\varphi(x)=\Vert x \Vert$).
			So, for $s=ax$ and $t=bx$  we have $\varphi(s)=\|s\|$, $\varphi(t)=\|t\|$ and $\varphi(s+t)=\|s+t\|$. Take $u\in H_{s,t}$. Then $\varphi(u-s) \leq\|u-s\|=\frac{1}{2}\|s-t\|$ and $\varphi(t-u) \leq\|t-u\|=\frac{1}{2}\|s-t\|$. But $\varphi(u-s)+\varphi(t-u)=\varphi(t-s)=\|t-s\|$, hence $\varphi(u-s)=\varphi(t-u)$. From this we get $\varphi(u)=\frac{1}{2}\varphi(s+t)=\frac{1}{2}\|s+t\|$ for all $u\in H_{s,t}$.
			%Now we can compute the distance between the sets $H_{s,t}$ and $H_{-s,-t}=-H_{s,t}$.
			If $u,v\in H_{s,t}$, then
			$$\|u+v\|\geq\varphi(u+v)=\varphi(u)+\varphi(v)=\varphi(s+t)=\|s+t\|.$$
			On the other hand
			$$\|u-v\|=\|(u-s)-(v-s)\|\leq\|u-s\|+\|v-s\|=\|s-t\|.$$
			Since $\|s-t\|\leq\|s+t\|$ we conclude that $\|u+v\|\geq\|u-v\|$ for all $u,v\in H_{s,t}$.
			
			(ii): Let $u,v\in H_{x,y}$. Then $x-u, \, x-v, \, u-y, \, v-y\in H_{x-y,0}$. From (i) (take $s=0$ and $t=x-y$) we conclude that
			\begin{align*}
			\|x-y\|&=\tfrac{1}{2}\|x-y\|+\tfrac{1}{2}\|x-y\|=\|x-u\|+\|v-y\|\geq\|(x-u)+(v-y)\|\\
			&\geq\|(x-u)-(v-y)\|=\|(x+y)-(u+v)\|\geq\|x+y\|-\|u+v\|.
			\end{align*}
			Hence $\|u+v\|\geq\|x+y\|-\|x-y\|\geq\|x-y\|$. Since $\|u-v\|\leq\|x-y\|$, the claim follows.
		\end{proof}

\begin{proposition}\label{prop center}
Let $x,y\in X$ and suppose that $\|x-y\|<\|x+y\|$.
\begin{itemize}
	\item[(i)] If  $\Vert f(x)-f(y) \Vert = \Vert x-y \Vert$, $\Vert u+v \Vert \geq \Vert u-v \Vert$ for all $u, v \in H_{x,y}$ and $\|u'+v'\|\geq\|u'-v'\|$ for all  $u',v' \in  H_{f(x),f(y)}$, then $f(\frac{1}{2}(x+y))=\pm \frac{1}{2}(f(x)+f(y))$. 
	\item[(ii)] If  $\Vert f(x)-f(y) \Vert = \Vert x+y \Vert$, $\Vert u+v \Vert \geq \Vert u-v \Vert$ for all $u, v \in H_{x,y}$ and $\|u'+v'\|\geq\|u'-v'\|$ for all $u', v' \in  H_{f(x),-f(y)}$, then $f(\frac{1}{2}(x+y))=\pm \frac{1}{2}(f(x)- f(y))$. 
	\item[(iii)] $f\left(\tfrac{1}{2}x\right)= \pm \tfrac{1}{2}f(x)$.
\end{itemize} 
\end{proposition}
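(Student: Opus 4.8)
The plan is to read off the conclusion from the two tools already built: Proposition \ref{sets1}, which tells us where $f$ sends the starting set $G_1=H_{x,y}\cup H_{-x,-y}$, and Proposition \ref{center}, which identifies the midpoint $\tfrac12(x+y)$ as (one of the two points in) the intersection of the whole tower $G_1\supseteq G_2\supseteq\cdots$. The bridge between them is the observation that the entire recursion defining the $G_n$ depends on the underlying pair only through the quantity $\rho(u,v)=\min\{\|u-v\|,\|u+v\|\}$, and that \emph{$f$ preserves $\rho$}. Indeed, \eqref{main} asserts the equality of the two sets $\{\|f(u)+f(v)\|,\|f(u)-f(v)\|\}=\{\|u+v\|,\|u-v\|\}$, so taking the smaller element of each side gives
$$\min\{\|f(u)-f(v)\|,\|f(u)+f(v)\|\}=\min\{\|u-v\|,\|u+v\|\},\quad u,v\in X.$$
Since $f$ is also a bijection (Lemma \ref{lemma preliminaries}(i)), it transports the recursion faithfully.

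For part (i), I would set $G_1=H_{x,y}\cup H_{-x,-y}$ on the domain side and $G_1'=H_{f(x),f(y)}\cup H_{-f(x),-f(y)}$ on the target side; the equality $f(G_1)=G_1'$ is exactly Proposition \ref{sets1}(i), available because $\|x-y\|<\|x+y\|$ and $\|f(x)-f(y)\|=\|x-y\|$. I then prove by induction that $f(G_n)=G_n'$ for all $n$, where $G_n'$ is produced by the same recursion applied to $G_1'$. Assuming $f(G_{n-1})=G_{n-1}'$, bijectivity together with $\rho$-invariance first forces $d_{n-1}'=d_{n-1}$, and then the membership condition ``$\rho(u,v)\le\tfrac12 d_{n-1}$ for all $v\in G_{n-1}$'' becomes verbatim ``$\rho(f(u),f(v))\le\tfrac12 d_{n-1}'$ for all $f(v)\in G_{n-1}'$'', so $u\in G_n\iff f(u)\in G_n'$. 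Passing to intersections (again using injectivity) gives $f\bigl(\bigcap_n G_n\bigr)=\bigcap_n G_n'$. Now Proposition \ref{center} applies on the domain side by the hypothesis on $H_{x,y}$ and on the target side by the hypothesis on $H_{f(x),f(y)}$, identifying the two intersections as $\{z,-z\}$ with $z=\tfrac12(x+y)$ and $\{z',-z'\}$ with $z'=\tfrac12(f(x)+f(y))$. Hence $\{f(z),-f(z)\}=\{z',-z'\}$ (using $f(-z)=-f(z)$), which is precisely $f\bigl(\tfrac12(x+y)\bigr)=\pm\tfrac12(f(x)+f(y))$.

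Part (ii) runs in parallel: since $\|f(x)-f(y)\|=\|x+y\|$, Proposition \ref{sets1}(ii) gives $f(G_1)=H_{-f(x),f(y)}\cup H_{f(x),-f(y)}$, which I rewrite as $H_{a,b}\cup H_{-a,-b}$ with $a=f(x)$, $b=-f(y)$ (using $H_{-f(x),f(y)}=-H_{f(x),-f(y)}$). Running the identical recursion and invoking Proposition \ref{center} for the pair $(f(x),-f(y))$ — whose order-hypothesis is the stated assumption on $H_{f(x),-f(y)}$ — produces the center $z'=\tfrac12(f(x)-f(y))$ and the conclusion. For part (iii) I would take $y=0$: here Proposition \ref{sets1}(iii) supplies the base step $f(H_{x,0}\cup H_{-x,0})=H_{f(x),0}\cup H_{-f(x),0}$, while the order-hypotheses needed for Proposition \ref{center} are now free of charge, since Lemma \ref{important}(i) with $a=0$, $b=1$ (that is, $s=0$, $t=x$) gives $\|u+v\|\ge\|u-v\|$ for all $u,v\in H_{x,0}$, and the same lemma applied to $f(x)$ gives it for $H_{f(x),0}$. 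The recursion then yields $\bigcap_n G_n=\{\tfrac12 x,-\tfrac12 x\}$ and $\bigcap_n G_n'=\{\tfrac12 f(x),-\tfrac12 f(x)\}$, whence $f\bigl(\tfrac12 x\bigr)=\pm\tfrac12 f(x)$.

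The one step that demands genuine care is the inductive identity $f(G_n)=G_n'$; everything after it is an application of Propositions \ref{sets1} and \ref{center} and Lemma \ref{important}. The delicate point is that the recursion must be read purely through $\rho$, so that bijectivity transports both the suprema $d_{n-1}$ and the defining inequalities across $f$ without distortion — it is exactly here that the phase-isometry property, rather than a genuine isometry property, is being used.
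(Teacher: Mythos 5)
Your proposal is correct and follows essentially the same route as the paper: use Proposition \ref{sets1} to get $f(G_1)=G_1'$, transport the recursion via the $f$-invariance of $\min\{\|u-v\|,\|u+v\|\}$ to obtain $d_n=d_n'$ and $f(G_n)=G_n'$ inductively, and then apply Proposition \ref{center} on both sides (with Lemma \ref{important}(i) supplying the order hypotheses in part (iii)). The only cosmetic difference is that you explicitly pass to the equality of the intersections, whereas the paper just notes $f(z)\in\bigcap_n G_n'=\{z',-z'\}$; both are valid.
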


\begin{proof}
	(i): By Proposition \ref{sets1}(i) we have
$$f(H_{x,y}\cup -H_{x,y})=H_{f(x),f(y)}\cup -H_{f(x),f(y)}.$$ 
Let 
$$G_1=H_{x,y}\cup -H_{x,y},\quad G_1'=H_{f(x),f(y)}\cup -H_{f(x),f(y)},$$
and for $n=2,3,\ldots$ let
$$d_{n-1}=\sup_{u,v\in G_{n-1}}\min\{\|u-v\|,\|u+v\|\},\,d_{n-1}'=\sup_{u',v'\in G_{n-1}'}\min\{\|u'-v'\|,\|u'+v'\|\},$$
where
$$G_n=\{u\in G_{n-1}: \min\{\|u-v\|,\|u+v\|\}\leq \tfrac{1}{2}d_{n-1}\; \text{for all}\; v\in G_{n-1}\},$$
$$G_n'=\{u'\in G_{n-1}': \min\{\|u'-v'\|,\|u'+v'\|\}\leq \tfrac{1}{2}d_{n-1}'\; \text{for all}\; v'\in G_{n-1}'\}.$$

From \eqref{main} and from $f(G_1)=G_1'$ we get
$d_1=d_1'$.
If we assume inductively that $f(G_{n-1})=G_{n-1}'$ and $d_{n-1}=d_{n-1}'$, then for $u\in G_n$ and $v'\in G_{n-1}'$ we have $v'=f(v)$ for some $v\in G_{n-1}$ while
\begin{align*}
\min\{\|f(u)-v'\|,\|f(u)+v'\|\} & =\min\{\|f(u)-f(v)\|,\|f(u)+f(v)\|\}\\
& = \min\{\|u-v\|,\|u+v\|\}\leq\tfrac{1}{2}d_{n-1}=\tfrac{1}{2}d_{n-1}'.
\end{align*}
Therefore, $f(G_n)\subseteq G_n'$. Similarly, from $G_{n-1}=f^{-1}(G_{n-1}')$ and $d_{n-1}=d_{n-1}'$ we get $f^{-1}(G_n')\subseteq G_n$. Hence $f(G_n)=G_n'$ for every positive integer $n$. By Proposition \ref{center} we conclude that $f(\frac{1}{2}(x+y))=\pm\frac{1}{2}(f(x)+f(y))$.

(ii): By Proposition \ref{sets1}(ii),
$$f(H_{x,y}\cup H_{-x,-y})=H_{f(x),f(-y)}\cup H_{f(-x),f(y)}.$$
We repeat the steps from (i) to get $f(\frac{1}{2}(x+y))=\pm\frac{1}{2}(f(x)-f(y))$.

(iii) We first apply Lemma \ref{important}(i) for $a=0$ and $b=1$ to conclude $\Vert u+v \Vert \geq \Vert u-v \Vert$ for all $u,v \in H_{x,0}$ and all $u', v'\in H_{f(x),0}$.
As in (i) we get $f(G_n)=G_n'$ for every positive integer $n$ and then we apply Proposition \ref{center}.
\end{proof}

		\begin{remark}\label{dod}
If nonzero $x,y \in X$ are such that $\Vert x+y \Vert \geq 2 \Vert x-y \Vert$ then either the assumptions of (i) or the assumptions of (ii) in Proposition \ref{prop center} are satisfied. Namely, Lemma \ref{important}(ii) implies $\Vert u+v \Vert \geq \Vert u-v \Vert$ for all $u,v \in H_{x,y}$.
Furthermore, if $\Vert f(x)-f(y) \Vert = \Vert x-y \Vert$ we apply Lemma \ref{important}(ii) to $f(x)$ and $f(y)$ instead of $x$ and $y$, and if $\Vert f(x)-f(y) \Vert = \Vert x+y \Vert$ we apply Lemma \ref{important}(ii) to $f(x)$ and $-f(y)$. Hence $f(\frac{1}{2}(x+y))=\frac{1}{2}(\pm f(x) \pm f(y))$.
\end{remark}

%\begin{proposition}\label{cor1}
%Let $x\in X$ be nonzero and {let} $0\leq a<b$ {be}  real numbers.  Then
%$$f(\tfrac{1}{2}(a+b)x)=\tfrac{1}{2}(\pm f(ax)\pm f(bx)).$$
%\end{proposition}
%\begin{proof}
%Denote $s=ax$ and $t=bx$. If $a>0$, then $\|s-t\|<\|s+t\|$ and the claim follows from Proposition \ref{prop center} and Lemma \ref{important} (i). If $a=0$ let $a_n>0$ such that $a_n\to a$. Then $\frac{1}{2}((a_n+b)x)\to\frac{1}{2}bx$. Then the claim follows from the first part of the proof and Lemma \ref{lemma preliminaries} (ii).
%\end{proof}

\begin{proposition}\label{hom}
Let $x\in X$ and {let} $\lambda\in\mathbb{R}$. Then $f(\lambda x)=\pm\lambda f(x)$.
\end{proposition}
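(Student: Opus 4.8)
The plan is to fix $x$ and show that the set
$A=\{\lambda\ge 0:\ f(\lambda x)=\lambda f(x)\ \text{or}\ f(\lambda x)=-\lambda f(x)\}$
equals $[0,\infty)$. The case $\lambda<0$ then follows from $f(-w)=-f(w)$ (Lemma \ref{lemma preliminaries}(i)) applied to $w=|\lambda|x$, and the degenerate case $x=0$ is trivial since $f(0)=0$. Throughout I may assume $x\neq0$, so that $f(x)\neq0$ because $f$ is norm preserving.

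First I would record two closure properties of $A$ coming from Proposition \ref{prop center}(iii), which asserts $f(\tfrac12 w)=\pm\tfrac12 f(w)$ for every $w$. Taking $w=\lambda x$ shows $\lambda\in A\Rightarrow\tfrac{\lambda}{2}\in A$ (halving), while taking $w=2\lambda x$ gives $f(\lambda x)=\pm\tfrac12 f(2\lambda x)$, whence $\lambda\in A\Rightarrow 2\lambda\in A$ (doubling).

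Next I would produce odd integers by a midpoint argument. For $n\ge1$ the vectors $2nx$ and $(2n+2)x$ satisfy $\|2nx+(2n+2)x\|\ge 2\|2nx-(2n+2)x\|$, so Remark \ref{dod} applies and yields $f((2n+1)x)=\tfrac12(\pm 2n\pm(2n+2))f(x)$ whenever $2n,2n+2\in A$. The four sign choices give coefficients $\pm(2n+1)$ and $\pm1$; but $\|f((2n+1)x)\|=(2n+1)\|x\|=(2n+1)\|f(x)\|$ since $f$ is norm preserving, so only $\pm(2n+1)$ survive, i.e.\ $2n+1\in A$. With halving, doubling, and this step in hand, a straightforward strong induction shows that every nonnegative integer lies in $A$: for even $n$ use doubling of $n/2$, and for odd $n=2k+1\ge3$ use the midpoint step on $2k$ and $2k+2$, both of which are in $A$ by doubling because $k$ and $k+1$ are (they are $<n$). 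Halving an integer $k\in A$ repeatedly then places every nonnegative dyadic rational $k/2^m$ in $A$.

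Finally I would pass from dyadic rationals to arbitrary $\lambda\ge0$ by approximation. Choosing dyadics $\lambda_j\to\lambda$, we have $\lambda_j x\to\lambda x$, so Lemma \ref{lemma preliminaries}(ii) furnishes a subsequence with $f(\lambda_{j_i}x)\to f(\lambda x)$ or $\to-f(\lambda x)$. Each term equals $\pm\lambda_{j_i}f(x)$, and since $\lambda_{j_i}f(x)\to\lambda f(x)$ while $-\lambda_{j_i}f(x)\to-\lambda f(x)$, every limit point of this sequence lies in $\{\lambda f(x),-\lambda f(x)\}$; hence the limit $\pm f(\lambda x)$ equals $\pm\lambda f(x)$, giving $\lambda\in A$. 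The main obstacle is exactly this last step: because nothing forces the sign in $f(\mu x)=\pm\mu f(x)$ to depend continuously on $\mu$, one cannot simply take the limit of an expression $\mu f(x)$; the argument works only because for $\lambda>0$ the two admissible limits $\lambda f(x)$ and $-\lambda f(x)$ are separated (at distance $2\lambda\|f(x)\|>0$), so the convergent subsequence must single out one of them. The same separation is what lets norm preservation discard the spurious coefficients $\pm1$ in the midpoint step.
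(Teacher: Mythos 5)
Your proof is correct and follows essentially the same route as the paper: establish $f(nx)=\pm nf(x)$ for positive integers by induction using the midpoint results, extend to a dense set of scalars (dyadic rationals rather than all rationals), pass to the limit via Lemma \ref{lemma preliminaries}(ii) using the separation of $\lambda f(x)$ and $-\lambda f(x)$, and finish with oddness. The only substantive difference is the induction step for odd integers: the paper applies Proposition \ref{prop center}(i)/(ii) together with Lemma \ref{important}(i) to the pair $(x,nx)$, splitting on the sign of $f(nx)$, whereas you apply Remark \ref{dod} to the pair $(2nx,(2n+2)x)$ (where $\|u+v\|\geq 2\|u-v\|$ holds automatically) and then discard the spurious coefficients $\pm1$ by norm preservation --- a valid and slightly more self-contained variant.
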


\begin{proof}
From Proposition \ref{prop center}(iii) it follows that $f(2x)=\pm 2 f(x)$.
We proceed by induction.
Suppose that $f(nx)=\pm n f(x)$.

If $f(nx)=n f(x)$ then Proposition \ref{prop center}(i) and Lemma \ref{important}(i) imply
$$f(\tfrac{1}{2}(x+nx))= \pm \tfrac{1}{2}(f(x)+f(nx))= \pm \tfrac{1}{2}(n+1)f(x),$$
hence
$$(n+1)f(x) = \pm 2 f(\tfrac{1}{2}(n+1)x)=\pm f((n+1)x).$$

If $f(nx)=-n f(x)$ then we use Proposition \ref{prop center}(ii) and Lemma \ref{important}(i) to conclude
$$f(\tfrac{1}{2}((n+1)x))= \pm \tfrac{1}{2}(f(x)-f(nx))= \pm \tfrac{1}{2}(n+1)f(x).$$
As before we get $f((n+1)x)=\pm (n+1)f(x)$.

Thus by induction $f(nx)=\pm n f(x)$ for every positive integer $n$. 
Then on one hand we have $f(mx)=\pm m f(x)$ and on the other
$$f(mx)=f(n \cdot \tfrac{m}{n}x)= \pm n f(\tfrac{m}{n}x),$$
which implies $f(\tfrac{m}{n}x)=\pm \tfrac{m}{n} f(x)$ for all positive integers $m$ and $n$.

We apply Lemma \ref{lemma preliminaries}(ii) to conclude that $f(\lambda x)=\pm\lambda f(x)$ for all real $\lambda\geq0$. Since $f$ is odd this holds for all $\lambda\in\mathbb{R}$ and the proof is complete.
\end{proof}

For $x,y \in X$ we shall write $x\perp y$ if $x$ is Birkhoff--James orthogonal to $y$, that is,
$$\|x+\lambda y\|\geq\|x\|\quad \text{for all }\lambda\in\mathbb{R}.$$
Note that this notion of orthogonality is homogeneous, but is neither symmetric nor additive.

\begin{lemma}\label{lin ind}
Let $x,y\in X$. Then the following holds:
\begin{itemize}
\item[(i)] If $x$ and $y$ are linearly independent, then $f(x)$ and $f(y)$ are linearly independent.
\item[(ii)] If $x\perp y$, then $f(x)\perp f(y)$.
\end{itemize}
\end{lemma}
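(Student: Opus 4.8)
The plan is to derive both statements directly from the homogeneity relation $f(\lambda x)=\pm\lambda f(x)$ of Proposition \ref{hom}, together with the injectivity and oddness recorded in Lemma \ref{lemma preliminaries}(i).

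For (i), I would argue by contradiction. Suppose $x$ and $y$ are linearly independent but $f(x)$ and $f(y)$ are linearly dependent. Since $f$ preserves norms, both $f(x)$ and $f(y)$ are nonzero, so the dependence can be written as $f(y)=\mu f(x)$ for some $\mu\in\mathbb{R}\setminus\{0\}$. Proposition \ref{hom} gives $f(\mu x)=\pm\mu f(x)=\pm f(y)$, and since $f$ is odd this reads $f(\mu x)=f(\pm y)$. Injectivity then forces $\mu x=\pm y$, so $y\in\mathrm{span}\{x\}$, contradicting linear independence.

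For (ii), I would fix an arbitrary $\mu\in\mathbb{R}$ and exploit \eqref{main} at the pair $(x,\mu y)$. Birkhoff--James orthogonality $x\perp y$ gives $\|x+\mu y\|\geq\|x\|$ and $\|x-\mu y\|\geq\|x\|$ (taking $\lambda=\pm\mu$ in its definition), so both entries of the right-hand set in \eqref{main} are at least $\|x\|$. Consequently both $\|f(x)+f(\mu y)\|$ and $\|f(x)-f(\mu y)\|$ are at least $\|x\|=\|f(x)\|$. By Proposition \ref{hom} we have $f(\mu y)=\pm\mu f(y)$, whence
$$\{\|f(x)+f(\mu y)\|,\|f(x)-f(\mu y)\|\}=\{\|f(x)+\mu f(y)\|,\|f(x)-\mu f(y)\|\}.$$
In particular $\|f(x)+\mu f(y)\|\geq\|f(x)\|$, and since $\mu$ was arbitrary this is exactly $f(x)\perp f(y)$.

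I expect no serious obstacle here: the only delicate point is the unknown sign in $f(\mu y)=\pm\mu f(y)$, but this is harmless because in both statements the relevant condition is symmetric under $f(y)\mapsto -f(y)$ --- the conclusion of (i) (linear independence) and the defining inequality of Birkhoff--James orthogonality in (ii) are each insensitive to replacing $y$ by $-y$. This symmetry is precisely what lets the sign ambiguity of Proposition \ref{hom} pass through without any case analysis.
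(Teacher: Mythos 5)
Your proof is correct, and part (ii) is essentially identical to the paper's argument. For part (i) the paper instead reads off $\|f(x)\pm f(\lambda y)\|\neq 0$ for all $\lambda$ directly from \eqref{main} (linear independence forces $\|x\pm\lambda y\|\neq 0$) and then substitutes $f(\lambda y)=\pm\lambda f(y)$; your contradiction via injectivity and oddness is an equally valid one-line variant resting on the same ingredients, so the two routes are essentially the same.
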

\begin{proof}
(i): Suppose $x$ and $y$ are linearly independent and let $\lambda\in\mathbb{R}$. From
\begin{equation}\label{eqeq}
\{\|f(x)+f(\lambda y)\|,\|f(x)-f(\lambda y)\|\}=\{\|x+\lambda y\|,\|x-\lambda y\|\}
\end{equation}
it follows that
\begin{equation}\label{linearly independent}
\|f(x)+f(\lambda y)\|\ne0\quad\text{and}\quad \|f(x)-f(\lambda y)\|\ne0.
\end{equation}
From $f(\lambda y)=\pm\lambda f(y)$ and (\ref{linearly independent}) the claim follows.

(ii): Suppose $x\perp y$ and let $\lambda\in\mathbb{R}$. Then $\min\{\|x+\lambda y\|,\|x-\lambda y\|\}\geq\|x\|$.  From (\ref{eqeq}) it follows that
\begin{equation}\label{preserving orth}
\|f(x)+f(\lambda y)\|\geq\|x\|=\|f(x)\|\quad\text{and}\quad \|f(x)-f(\lambda y)\|\geq\|x\|=\|f(x)\|.
\end{equation}
Now the claim follows from $f(\lambda y)=\pm\lambda f(y)$ and (\ref{preserving orth}).
\end{proof}

If $M$ is a set, then $\langle M\rangle$ will denote the subspace generated by the set $M$.
If $M=\{x\}$ is a singleton, we write $\langle x \rangle$ for the one dimensional subspace generated by $M$.
Analogously, if $M=\{x, y\}$, we write $\langle x, y \rangle$ for the two dimensional subspace generated by $M$.
Furthermore, $[x,y]$ denotes $\{(1-\lambda)x+\lambda y \, : \, \lambda \in [0,1]\}$.

\begin{lemma}\label{very important}
Let $x,y\in X$ be linearly independent vectors such that $\Vert x \Vert \leq \Vert y \Vert$.
Let $z=\frac{1}{4}x+\frac{3}{4}y$.
If $f([z,y]) \subseteq \langle f(z),f(y)\rangle$ then $f([x,y]) \subseteq \langle f(z), f(y) \rangle = \langle f(x), f(y) \rangle$.
\end{lemma}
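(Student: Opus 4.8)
The plan is to parametrize the affine line through $x$ and $y$ by $\ell(t)=(1-t)x+ty$, so that $\ell(0)=x$, $\ell(1)=y$ and $\ell(3/4)=z$, and to prove that $f(\ell(t))\in V$ for every $t\in[0,1]$, where $V=\langle f(z),f(y)\rangle$. Since $x$ and $y$ are linearly independent, so are $z$ and $y$, and hence $f(z)$ and $f(y)$ are linearly independent by Lemma \ref{lin ind}(i); thus $V$ is a two-dimensional, and in particular closed, subspace. Once $f(\ell(0))=f(x)\in V$ is established, linear independence of $f(x),f(y)$ (again Lemma \ref{lin ind}(i)) forces $\langle f(x),f(y)\rangle=V$, and then $f([x,y])=f(\ell([0,1]))\subseteq V$ yields the lemma. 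Note also that $\ell(t)\neq 0$ for all $t\in[0,1]$, once more by linear independence.

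The engine of the extension is a reflection identity combined with the midpoint formula of Remark \ref{dod}. For $a<c$ put $m=\tfrac12(a+c)$; then $\ell(m)=\tfrac12(\ell(a)+\ell(c))$, while $\ell(a)-\ell(c)=(a-c)(x-y)$. Hence the hypothesis of Remark \ref{dod} for the pair $\ell(a),\ell(c)$, namely $\|\ell(a)+\ell(c)\|\geq 2\|\ell(a)-\ell(c)\|$, becomes
$$\|\ell(m)\|\geq (c-a)\,\|x-y\|.$$
When this holds, Remark \ref{dod} gives $f(\ell(m))=\tfrac12(\pm f(\ell(a))\pm f(\ell(c)))$, so if $f(\ell(m))$ and $f(\ell(c))$ both lie in $V$ one may solve for $f(\ell(a))=\pm 2f(\ell(m))\mp f(\ell(c))\in V$. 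Thus, knowing $f\in V$ at the midpoint and at the right endpoint of a short subinterval near $y$ lets me push membership in $V$ to the reflected left point $\ell(a)$.

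With this mechanism I would run a supremum argument. Let $T=\{t\in[0,1]:f(\ell(s))\in V \text{ for all } s\in[t,1]\}$; this is a nonempty up-set containing $3/4$ by hypothesis, so $\alpha_0:=\inf T$ lies in $[0,3/4]$. First I claim $\alpha_0\in T$: for $s>\alpha_0$ one picks $t\in T$ with $t<s$ and reads off $f(\ell(s))\in V$, while for $s=\alpha_0$ one takes $\alpha_n\downarrow\alpha_0$ in $T$, applies Lemma \ref{lemma preliminaries}(ii) to $\ell(\alpha_n)\to\ell(\alpha_0)$ to extract a subsequence with $f(\ell(\alpha_{n_i}))\to\pm f(\ell(\alpha_0))$, and invokes closedness of $V$ to get $f(\ell(\alpha_0))\in V$. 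Suppose now $\alpha_0>0$. Since $\ell(\alpha_0)\neq 0$, for every $c$ with $\alpha_0<c\leq 1$ and $c-\alpha_0$ small enough the displayed inequality holds with $m=\alpha_0$, so reflecting $\ell(c)$ through $\ell(\alpha_0)$ yields $f(\ell(2\alpha_0-c))\in V$. Letting $c$ range over a small interval $(\alpha_0,\alpha_0+\eta]$ shows $f(\ell(t))\in V$ for all $t\in[\alpha_0-\eta,\alpha_0)$ as well, whence $\alpha_0-\eta\in T$, contradicting $\alpha_0=\inf T$. Therefore $\alpha_0=0$ and $f(\ell(t))\in V$ throughout $[0,1]$.

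The hard part is precisely the norm condition $\|\ell(m)\|\geq(c-a)\|x-y\|$ required to invoke Remark \ref{dod}: it can fail for long reflection steps (e.g.\ when $x,y$ are nearly antipodal, $\|\ell(1/2)\|$ is small while $\|x-y\|$ is large), so one cannot in general reach $x$ from $z$ in a bounded number of full-length reflections. This is what forces the infinitesimal extension together with the limiting step, in which the subsequential convergence of Lemma \ref{lemma preliminaries}(ii) and the closedness of the finite-dimensional $V$ do the decisive work. The normalization $\|x\|\leq\|y\|$ serves mainly to single out $y$ as the anchor endpoint for the reflections; what the argument genuinely needs is only that $\ell(t)$ never vanishes on $[0,1]$.
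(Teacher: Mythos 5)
Your proof is correct, and its engine is the same as the paper's: reflect points of the already-controlled subsegment through its left endpoint $\ell(m)$, check $\|\ell(a)+\ell(c)\|\geq 2\|\ell(a)-\ell(c)\|$ so that Remark \ref{dod} yields $f(\ell(m))=\tfrac12(\pm f(\ell(a))\pm f(\ell(c)))$, and solve for the new value $f(\ell(a))$ inside the two-dimensional subspace $V$. Where you differ is in how the extension is driven all the way to $x$. The paper first observes that $d=\min_{\lambda}\|(1-\lambda)x+\lambda y\|>0$ gives a \emph{uniform} lower bound $\|\ell(m)\|\geq d$ along the whole segment, hence a reflection step of fixed length $d/(4\|y\|)$ (this is where $\|x\|\leq\|y\|$ is used, via $\|x-y\|\leq 2\|y\|$), so that $x$ is reached after an explicit finite number $n_0\geq 3\|y\|/d$ of steps, with no limiting process. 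You instead run a soft infimum argument on $T=\{t: f(\ell(s))\in V \text{ for } s\in[t,1]\}$: the local step size at $\alpha_0=\inf T$ suffices for the contradiction, but you must close the gap at $\alpha_0$ itself, which you do correctly by combining Lemma \ref{lemma preliminaries}(ii) with the closedness of the finite-dimensional $V$ — a tool the paper's proof of this lemma never needs (though the paper does use exactly this closedness-plus-density device in the proof of Proposition \ref{two}). Your version buys a slightly cleaner statement (no uniform minimum $d$ to compute, and, as you note, the hypothesis $\|x\|\leq\|y\|$ becomes superfluous since all you need is $\ell(t)\neq 0$ on $[0,1]$); the paper's version buys a purely finite, constructive iteration. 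Both are complete proofs.
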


\begin{proof}
Let $\varphi \colon \mathbb{R} \to \mathbb{R}^+$ be defined by $\varphi(\lambda)=\|(1-\lambda)x+\lambda y\|$.
Since $\varphi$ is continuous, convex and $\varphi(\lambda) \to \infty$ when $\vert \lambda \vert \to \infty$, there is (at least one) $\lambda_0$ such that
$\min_\lambda\|(1-\lambda)x+\lambda y\|= \|(1-\lambda_0)x+\lambda_0 y\|=:d$.
Since $x$ and $y$ are linearly independent, $d>0$.
Take $\mu \in \mathbb{R}$ such that $\tfrac{3}{4} \leq \mu \leq \tfrac{d}{4\Vert y \Vert} + \tfrac{3}{4}$.
Note that $d \leq \Vert y \Vert$, so $\tfrac{d}{4\Vert y \Vert} + \tfrac{3}{4} \leq 1$.
For $u=(1-\mu)x+\mu y$ and $\tilde{u}=2z-u$, the reflection of $u$ over $z$, we get
$$\Vert u - \tilde{u} \Vert = 2 \Vert z-u \Vert = 2(\mu-\tfrac{3}{4}) \Vert x-y \Vert 
\leq 4(\mu-\tfrac{3}{4}) \Vert y \Vert \leq  d.$$
On the other hand,
$$\Vert u + \tilde{u} \Vert = 2 \Vert z \Vert \geq 2d,$$
hence $\Vert u + \tilde{u} \Vert \geq 2 \Vert u - \tilde{u} \Vert$.
By Proposition \ref{prop center}, see Remark \ref{dod}, we conclude that
$$f(z) = f\left(\tfrac{1}{2}(u+\tilde{u})\right)= \pm \tfrac{1}{2}f(u) \pm \tfrac{1}{2}f(\tilde{u}).$$
Since $u \in [z,y]$, by our assumption we have $f(u) \in \langle f(z), f(y) \rangle$.
Then $f(\tilde{u}) \in \langle f(z), f(y) \rangle$ as well.
Note that
$\tilde{u}=(1-\tilde{\mu})x+\tilde{\mu}y$ with $\tfrac{3}{4}-\tfrac{d}{4\Vert y \Vert} \leq \tilde{\mu} \leq \tfrac{3}{4}$.
If we set 
$w_1=\left(\tfrac{1}{4}+\tfrac{d}{4\Vert y \Vert}\right)x+\left(\tfrac{3}{4}-\tfrac{d}{4\Vert y \Vert}\right)y$
then we have proved that $f([w_1, y]) \subseteq \langle f(z), f(y) \rangle$.

We repeat this procedure. 
Take $\nu \in \mathbb{R}$ such that $\tfrac{3}{4}-\tfrac{d}{4\Vert y \Vert} \leq \nu \leq \tfrac{3}{4}$.
For $v=(1-\nu)x+\nu y  \in [w_1,z]$ and $\tilde{v}=2w_1-v$ we have
$$\Vert v - \tilde{v} \Vert = 2 \Vert w_1-v \Vert = 2 \left(\nu + \tfrac{d}{4\Vert y \Vert} - \tfrac{3}{4}\right)\Vert x-y \Vert \leq 4\left(\nu+\tfrac{d}{4\Vert y \Vert}- \tfrac{3}{4}\right) \Vert y \Vert \leq d$$
and $\Vert v + \tilde{v} \Vert = 2 \Vert w_1 \Vert \geq 2d$.
Proposition \ref{prop center} implies
$$f(w_1)=f\left(\tfrac{1}{2}(v+\tilde{v})\right)=\pm \tfrac{1}{2}f(v) \pm \tfrac{1}{2}f(\tilde{v}).$$
Since $f(w_1), f(v) \in \langle f(z), f(y) \rangle$ it follows that $f(\tilde{v}) \in \langle f(z), f(y) \rangle$.
Note that $\tilde{v}=(1-\tilde{\nu})x+\tilde{\nu} y$ with $\tilde{\nu}=\tfrac{3}{2}-\nu-\tfrac{d}{2\Vert y \Vert}$ 
and $\tfrac{3}{4}-\tfrac{d}{2\Vert y \Vert} \leq \tilde{\nu} \leq \tfrac{3}{4}-\tfrac{d}{4\Vert y \Vert}$.
Hence $f([w_2, y]) \subseteq \langle f(z), f(y) \rangle $ for $w_2=\left(\tfrac{1}{4}+\tfrac{d}{2\Vert y \Vert}\right)x+\left(\tfrac{3}{4}-\tfrac{d}{2\Vert y \Vert}\right)y$.

Continuing this procedure we get, for every positive integer $n$, $f([w_n, y]) \subseteq \langle f(z), f(y) \rangle $ with $w_n=\left(\tfrac{1}{4}+n \tfrac{d}{4\Vert y \Vert}\right)x+\left(\tfrac{3}{4}-n \tfrac{d}{4\Vert y \Vert}\right)y$.
There exists a positive integer $n_0$ such that that $n_0 \geq \tfrac{3\Vert y \Vert}{d}$. Then $x \in [w_{n_0}, y]$, hence $f(x) \in \langle f(z), f(y) \rangle$ and finally $f(z) \in \langle f(x), f(y) \rangle$. Therefore $f([x, y]) \subseteq f([w_{n_0}, y]) \subseteq \langle f(z), f(y) \rangle = \langle f(x), f(y) \rangle$.
\end{proof}

\begin{proposition}\label{two}
Let $\Pi\subseteq X$ be a two dimensional subspace. Then $f(\Pi)\subseteq Y$ is also a two dimensional subspace.
\end{proposition}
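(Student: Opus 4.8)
The plan is to fix a basis $x,y$ of $\Pi$ with $\|x\|\le\|y\|$ and to prove that $f(\Pi)=\langle f(x),f(y)\rangle=:\Sigma$. By Lemma \ref{lin ind}(i) the images $f(x),f(y)$ are linearly independent, so $\Sigma$ is a genuine two dimensional subspace; it remains to establish both inclusions. It is convenient to prove the single statement $(\ast)$: \emph{every surjective solution of \eqref{main} maps a two dimensional subspace into the span of the images of any basis}. For the inclusion $f(\Pi)\subseteq\Sigma$ I would first reduce to the two segments $[x,y]$ and $[x,-y]$: every $w=\alpha x+\beta y\in\Pi$ is, after adjusting signs, a nonnegative multiple of a point of one of $[x,y],[x,-y],[-x,y],[-x,-y]$ (and $f(0)=0\in\Sigma$), so by Proposition \ref{hom} together with oddness of $f$ the last two cases reduce to the first two and scalar multiples cause no trouble. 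Since $f(-y)=-f(y)$ we have $\langle f(x),f(-y)\rangle=\Sigma$, so it suffices to show $f([x,y])\subseteq\Sigma$ and $f([x,-y])\subseteq\Sigma$ (the latter is the former applied to the pair $(x,-y)$, which again satisfies $\|x\|\le\|-y\|$). The reverse inclusion $\Sigma\subseteq f(\Pi)$ then comes for free: $f^{-1}$ is again a surjective solution of \eqref{main}, so $(\ast)$ applied to $f^{-1}$ and $\Sigma$ gives $f^{-1}(\Sigma)\subseteq\langle x,y\rangle=\Pi$, i.e. $\Sigma\subseteq f(\Pi)$.

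The core is therefore $f([x,y])\subseteq\Sigma$. I would parametrize the segment by $p(t)=(1-t)x+ty$ and set $T=\{t\in[0,1]:f(p(t))\in\Sigma\}$. First I would check that $T$ is closed: if $t_k\to t$ with $t_k\in T$, then $p(t_k)\to p(t)$, so by Lemma \ref{lemma preliminaries}(ii) a subsequence of $\{f(p(t_k))\}$ converges to $f(p(t))$ or $-f(p(t))$; since each term lies in the closed subspace $\Sigma$, so does the limit, whence $t\in T$. Note that $0,1\in T$, and that by convexity of the norm and $\|x\|\le\|y\|$ one has $\|p(t)\|\le\|y\|$ for all $t\in[0,1]$, so Lemma \ref{very important} applies to every sub-configuration $(p(t'),y)$.

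The engine is an extension step. Put $t^*=\inf\{a:[a,1]\subseteq T\}$; since $T$ is closed, $[t^*,1]\subseteq T$. Suppose $0<t^*$. If $t^*\le\tfrac34$, then the seed segment $[p(\tfrac34),y]$ of the pair $(x,y)$ lies in $T$, and since $f(p(\tfrac34))$ and $f(y)$ are independent members of $\Sigma$ we get $\langle f(p(\tfrac34)),f(y)\rangle=\Sigma$; Lemma \ref{very important} then yields $f([x,y])\subseteq\Sigma$, i.e. $t^*=0$, a contradiction. If $t^*>\tfrac34$, I apply Lemma \ref{very important} to the pair $(p(t'),y)$ with $t'=4t^*-3\in(0,t^*)$: its seed segment $[\tfrac14 p(t')+\tfrac34 y,\ y]$ is exactly $[p(t^*),y]\subseteq T$, the seed plane $\langle f(p(t^*)),f(y)\rangle$ equals $\Sigma$, and the conclusion gives $f([p(t'),y])\subseteq\Sigma$, i.e. $[t',1]\subseteq T$ with $t'<t^*$, again contradicting minimality. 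Hence $t^*=0$ and $f([x,y])\subseteq\Sigma$, \emph{provided} $t^*\ne1$.

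The step I expect to be the main obstacle is exactly ruling out $t^*=1$, that is, seeding planarity on a short segment abutting $y$. This cannot be produced by the reflection mechanism of Lemma \ref{very important} alone, because reflecting a point over a point of the line $\langle y\rangle$ preserves the $x$-component, so no two dimensional planar piece arises for free; and a direct midpoint splitting of a point near $y$ fails the sufficient condition of Remark \ref{dod}. The route I would take is to exploit that for $t$ near $1$ Proposition \ref{prop center} does apply to the reflection $\tilde w$ of $w=p(t)$ in $ty$, since there $\|w+\tilde w\|\ge 2\|w-\tilde w\|$ holds; this yields the symmetry $f(p(t))\in\Sigma\iff f(p(2-t))\in\Sigma$ and, via the same proposition applied to nearby points, a small-scale midpoint convexity of $T$ close to $t=1$. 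Combining these with the compactness furnished by Lemma \ref{lemma preliminaries}(ii) should force $T$ to contain an interval $(1-\delta,1]$, giving $t^*<1$ and hence $t^*=0$. Once $t^*=0$ is established for both $[x,y]$ and $[x,-y]$ we obtain $(\ast)$, so $f(\Pi)\subseteq\Sigma$, and together with the $f^{-1}$ argument $f(\Pi)=\Sigma$ is two dimensional.
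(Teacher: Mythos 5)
Your overall architecture matches the paper's: reduce to the segments $[\pm x,\pm y]$ by homogeneity and oddness, use Lemma \ref{very important} to propagate planarity from a short segment abutting $y$ down to all of $[x,y]$, and get the reverse inclusion by running the argument for $f^{-1}$. (Your $t^*$-bootstrap is actually redundant, since Lemma \ref{very important} already iterates internally from $[z,y]$ all the way to $[x,y]$ in a single invocation, but it is correct.) The genuine gap is exactly the one you flag yourself: producing the seed, i.e.\ showing that $f$ maps $[z,y]$, $z=\tfrac14 x+\tfrac34 y$, into a two-dimensional subspace. Your sketch does not close it. Midpoint convexity of the closed set $T$ near $t=1$ forces nothing: a set whose intersection with $(1-\delta,1]$ is $\{1\}$ is closed and midpoint convex there, and with only the single point $1\in T$ known near $1$ the midpoint operation generates no new members. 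The reflection symmetry over $ty$ relates points of $[x,y]$ near $y$ to points of $[-x,y]$ near $y$, where exactly the same problem recurs, so it does not supply a second known point either.

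The idea you are missing is to change the target plane. You insist on $\Sigma=\langle f(x),f(y)\rangle$ throughout, and correctly observe that writing a point near $y$ as a midpoint of two points whose images are already known to lie in $\Sigma$ (i.e.\ points of $\langle x\rangle\cup\langle y\rangle$) violates the hypothesis of Remark \ref{dod}. The paper instead works with $\Sigma_z=\langle f(z),f(y)\rangle$ and first checks that for any $w_1=(1-\lambda)x+\lambda y$ and $w_2=(1-\mu)x+\mu y$ with $\lambda,\mu\in[\tfrac34,1]$ one has $\|w_1+w_2\|\geq (2(\lambda+\mu)-2)\|y\|\geq\|y\|$ while $\|w_1-w_2\|\leq|\mu-\lambda|(\|x\|+\|y\|)\leq\tfrac12\|y\|$, so $\|w_1+w_2\|\geq 2\|w_1-w_2\|$ for \emph{every} pair in $[z,y]$. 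Remark \ref{dod} then gives $f(\tfrac12(w_1+w_2))=\pm\tfrac12 f(w_1)\pm\tfrac12 f(w_2)$, so starting from the two endpoints $z$ and $y$ the dyadic points of $[z,y]$ are successively forced into $\Sigma_z$; density of the dyadic fractions, Lemma \ref{lemma preliminaries}(ii) and closedness of $\Sigma_z$ yield $f([z,y])\subseteq\Sigma_z$. Only after Lemma \ref{very important} has delivered $f(x)\in\Sigma_z$ does one identify $\Sigma_z=\langle f(x),f(y)\rangle$. With this seed in place, the rest of your argument (closedness of $T$, the reduction to the parallelogram, and the $f^{-1}$ step) goes through and coincides with the paper's proof.
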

\begin{proof}
Let $\Pi\subseteq X$ be a two dimensional subspace, $\Pi=\langle x,y\rangle$, where $x$ and $y$ are linearly independent. With no loss of generality we may and do assume that $\|x\|\leq\|y\|$. Let $z=\frac{1}{4}x+\frac{3}{4}y\in[x,y]$. Take any two points $w_1,w_2\in[z,y]$, say
$$w_1=(1-\lambda)x+\lambda y,\quad w_2=(1-\mu)x+\mu y,$$
where $\frac{3}{4}\leq\lambda,\mu\leq 1$. Then
\begin{align*}
\|w_1+w_2\|&=\|(2-\lambda-\mu)x+(\lambda+\mu)y\|\geq(\lambda+\mu)\|y\|-(2-\lambda-\mu)\|x\|\\
&\geq(2(\lambda+\mu)-2)\|y\|\geq\|y\|
\end{align*}
and
$$\|w_1-w_2\|=\|(\mu-\lambda)(x-y)\|\leq|\mu-\lambda|(\|x\|+\|y\|)\leq\tfrac{1}{2}\|y\|,$$
hence $\|w_1+w_2\|\geq2\|w_1-w_2\|$.  In particular, from Proposition \ref{prop center} we infer   that $f(\frac{1}{2}(z+y))\in\langle f(z),f(y)\rangle$. But the same is true for the midpoints $z_1$ of the segment $[z,\frac{1}{2}(z+y)]$ and $z_2$ of the segment $[\frac{1}{2}(z+y),y]$. Now we have four segments $[z,z_1],[z_1,\frac{1}{2}(z+y)],[\frac{1}{2}(z+y),z_2],[z_2,y]$. The same reasoning implies that all midpoints of the above four segments are again in $\langle f(z),f(y)\rangle$. Continuing in this way, and using the facts that dyadic fractions are dense and $\langle f(z),f(y)\rangle$ is closed,  by Lemma \ref{lemma preliminaries} we conclude that $f(u)\in \langle f(z),f(y)\rangle$ for all $u\in[z,y]$. Now it follows by Lemma \ref{very important} that $f([x,y])\subseteq \langle f(x),f(y)\rangle=\Pi'$. 

Let us consider the segment $[x,-y]$. By repeating the above steps and using the fact that $f(-y)=-f(y)$ we get $f([x,-y])\subseteq \langle f(x),f(y)\rangle=\Pi'$ and then also $f([-x,y])\subseteq \Pi'$ and $f([-x,-y])\subseteq \Pi'$.

Take any $\xi\in\langle x,y\rangle=\Pi$ and let $\xi'$ be the intersection of the set $\{\mu\xi: \mu>0\}$ with the union of the segments $[\pm x, \pm y]$ (parallelogram with the vertices $\pm x$ and $\pm y$). Then $\xi'=\mu\xi$ for some $\mu\in\mathbb{R}$ and by Proposition \ref{hom} we get  $f(\xi)=f(\frac{1}{\mu}\xi')=\pm\frac{1}{\mu} f(\xi')\in\Pi'$.
Hence $f(\Pi) \subseteq \Pi'$.
The reverse inclusion $f^{-1}(\Pi') \subseteq \Pi$ is obtained in the same way thanks to the fact that $f$ is bijective. Therefore  $f(\Pi) = \Pi'$ and this completes the proof.	
\end{proof}

\section{The main results}

Now we are ready to state the main theorem of this note.
For $\dim{X} \geq 3$ we need  the fundamental theorem of projective geometry. For example the following version, see \cite{Frolicher,Faure,Havlicek}.

\begin{theorem}[Fundamental theorem of projective geometry]\label{projective}
	Let $X$ and $Y$ be real vector spaces of dimensions at least three.
	Let $\mathbb PX$ and $\mathbb PY$ be the sets of all one dimensional subspaces of $X$ and $Y$, respectively.
    Let $g \colon \mathbb PX\to\mathbb PY$ be a mapping such that
	\begin{itemize}
		\item[(i)] The image of $g$ is not contained in a projective line.
		\item[(ii)] $0\ne c\in \langle a,b\rangle, a\ne 0\ne b,$ implies $g(\langle c\rangle)\in \langle g(\langle a\rangle), g(\langle b\rangle)\rangle$.
	\end{itemize}
	Then there exists an injective linear mapping $A \colon X\to Y$ such that
	$$g(\langle x\rangle)=\langle Ax\rangle,\quad 0\ne x\in X.$$
	Moreover, $A$ is unique up to a non-zero scalar factor.
\end{theorem}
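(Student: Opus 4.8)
The plan is to recover a linear map from the incidence-preserving map $g$ by coordinatizing the two projective spaces and exploiting that the only field endomorphism of $\mathbb{R}$ is the identity; this is the classical content of the statement, and I would follow the von Staudt ``algebra of throws'' route adapted to hypotheses (i) and (ii). First I would use (i) to set up a nondegenerate frame. Since $\dim X\geq 3$, I can pick linearly independent $a_0,a_1,a_2 \in X$; because the image of $g$ is not contained in a projective line, after possibly readjusting these choices I can arrange that $g(\langle a_0\rangle),g(\langle a_1\rangle),g(\langle a_2\rangle)$ are not collinear, so that (ii) forces $g$ to map the plane $\langle a_0,a_1,a_2\rangle$ into an honest projective plane of $\mathbb{P}Y$ rather than collapsing it onto a line. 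Extending to a full projective frame $\langle a_0\rangle,\ldots,\langle a_n\rangle$ (with $n+1=\dim X$) together with a unit point, I obtain a candidate $A$ by declaring each $Aa_i$ to be a representative of $g(\langle a_i\rangle)$, normalized against the unit point, and extending linearly.

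The heart of the argument is to show that $g(\langle x\rangle)=\langle Ax\rangle$ for every nonzero $x$. Fixing a projective line $\ell\subseteq\mathbb{P}X$ through two frame points, I would coordinatize $\ell$ by $\mathbb{R}\cup\{\infty\}$ using three marked points, and realize the field operations $+$ and $\times$ by the standard perspectivity diagrams drawn inside a plane of $\mathbb{P}X$, which is available precisely because $\dim X\geq 3$. By (ii), $g$ carries each such diagram to the corresponding diagram in $\mathbb{P}Y$, so the induced coordinate map $\phi\colon\mathbb{R}\to\mathbb{R}$ respects both constructions, i.e.\ $\phi$ is a field endomorphism of $\mathbb{R}$. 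Since any such $\phi$ fixes $\mathbb{Q}$, sends squares to squares and hence preserves the order, it is monotone, and a monotone map fixing the dense set $\mathbb{Q}$ must be the identity. Thus $g$ agrees with the projectivization of $A$ on $\ell$, and letting $\ell$ range over the lines of the frame propagates the equality to all of $\mathbb{P}X$. Injectivity of $A$ is then automatic, since $\langle Ax\rangle=g(\langle x\rangle)$ is a well-defined point, forcing $Ax\neq 0$. For uniqueness, if $A'$ also induces $g$ then $A'x=\mu(x)Ax$ for scalars $\mu(x)$; evaluating on the frame points and a point in general position shows $\mu$ is constant, so $A'=\lambda A$.

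I expect the main obstacle to be the reconstruction step, namely verifying that $\phi$ is additive and multiplicative purely from the preservation of collinearity: the von Staudt diagrams must be arranged so that no degeneracy arises when they are pushed forward by $g$. This is exactly where hypothesis (i) does the essential work, since $g$ is not assumed bijective; one must rule out, using only ``image not contained in a line,'' that some auxiliary point of a perspectivity diagram is collapsed by $g$, which would destroy the field identities. Guaranteeing that the frame and all auxiliary points remain in general position under $g$, so that every diagram stays nondegenerate, is the delicate part. Once that is in place, the passage from a field endomorphism of $\mathbb{R}$ to the identity, together with the globalization and the uniqueness up to a scalar, are routine.
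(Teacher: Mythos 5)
First, note that the paper does not prove this statement at all: it is quoted as a known version of the fundamental theorem of projective geometry, with the proof delegated to the references \cite{Frolicher}, \cite{Faure} and \cite{Havlicek}. So there is no ``paper proof'' to compare against; your proposal has to stand on its own as a proof of the cited result.

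As an outline, your strategy is the classical one (von Staudt's algebra of throws, plus the fact that the identity is the only field endomorphism of $\mathbb{R}$), and the frame-setting step, the reduction of linearity over $\mathbb{R}$ to the rigidity of field endomorphisms, and the uniqueness argument are all fine. But there is a genuine gap, and you have located it yourself without closing it: everything in the coordinatization step presupposes that $g$ does not collapse any projective line to a point and, more specifically, that all the auxiliary points of each perspectivity diagram remain in general position after applying $g$. Since $g$ is not assumed injective, this must be \emph{derived} from (i) and (ii), and that derivation is the actual mathematical content of the non-bijective version of the theorem --- it is precisely what distinguishes the Faure--Fr\"{o}licher ``morphisms of projective geometries'' framework from the elementary bijective collineation case. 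A correct treatment would, for instance, first show that if $g(\langle a\rangle)=g(\langle b\rangle)$ for independent $a,b$, then by (ii) the whole line $\langle a,b\rangle$ is collapsed to one point, and then propagate this through planes containing $\langle a,b\rangle$ to conclude that the entire image of $g$ lies on a projective line, contradicting (i); only then is $g$ injective and line-nondegenerate, and only then do the von Staudt diagrams make sense. Your writeup says this ``is the delicate part'' and stops there, so as it stands the proposal is an accurate roadmap rather than a proof: the step on which the whole argument hinges is named but not executed. If you intend to give a self-contained proof rather than cite \cite{Faure}, that is the step you must supply in detail.
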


\begin{theorem}
Let $X$ and $Y$ be real normed spaces. Then a surjective mapping $f \colon X\to Y$ satisfies
\begin{equation}\label{main2}
\{\|f(x)+f(y)\|,\|f(x)-f(y)\|\}=\{\|x+y\|,\|x-y\|\}, \quad x,y\in X,
\end{equation}
if and only if $f$ is phase equivalent to a surjective linear isometry.
\end{theorem}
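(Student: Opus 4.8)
The plan is to establish the two implications separately. The forward implication is a short computation: if $f=\sigma U$ with $U$ a surjective linear isometry and $\sigma\colon X\to\{-1,1\}$, then
$\{\|f(x)+f(y)\|,\|f(x)-f(y)\|\}=\{\|\sigma(x)Ux+\sigma(y)Uy\|,\|\sigma(x)Ux-\sigma(y)Uy\|\}$, and since changing the sign of one summand only interchanges the two norms, this set equals $\{\|U(x+y)\|,\|U(x-y)\|\}=\{\|x+y\|,\|x-y\|\}$. For the converse I would split according to $\dim X$. If $\dim X=1$, say $X=\langle x_0\rangle$, then Proposition~\ref{hom} gives $f(\lambda x_0)=\pm\lambda f(x_0)$, so $U(\lambda x_0):=\lambda f(x_0)$ is a linear isometry (surjective because $f(X)=\langle f(x_0)\rangle=Y$) and $f=\sigma U$. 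If $\dim X=2$, Proposition~\ref{two} gives $\dim Y=2$ and the statement is \cite[Theorem~9]{WB}.

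Assume now $\dim X\geq 3$. I would define $g\colon\mathbb{P}X\to\mathbb{P}Y$ by $g(\langle x\rangle)=\langle f(x)\rangle$, which is well defined by Proposition~\ref{hom} together with the fact that $f(x)\neq 0$ for $x\neq 0$ (Lemma~\ref{lemma preliminaries}(i)), and then verify the hypotheses of Theorem~\ref{projective}. Hypothesis (ii) is immediate from Proposition~\ref{two}: for linearly independent $a,b$ and $0\neq c\in\langle a,b\rangle$ one has $f(c)\in f(\langle a,b\rangle)=\langle f(a),f(b)\rangle$, which is precisely $g(\langle c\rangle)\in\langle g(\langle a\rangle),g(\langle b\rangle)\rangle$. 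For hypothesis (i) I would take linearly independent $x_1,x_2,x_3$; then $f(x_1),f(x_2)$ are independent by Lemma~\ref{lin ind}(i), and were $f(x_3)\in\langle f(x_1),f(x_2)\rangle=f(\langle x_1,x_2\rangle)$, injectivity of $f$ would force $x_3\in\langle x_1,x_2\rangle$, a contradiction; hence the image of $g$ is not contained in a projective line. Theorem~\ref{projective} then yields an injective linear $A\colon X\to Y$ with $\langle f(x)\rangle=\langle Ax\rangle$, that is, $f(x)=\tau(x)Ax$ for some nonzero scalars $\tau(x)$.

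The remaining and central point is that the scalars $\tau(x)$ have constant modulus. Norm preservation gives $|\tau(x)|\,\|Ax\|=\|x\|$, so it suffices to show that $\rho(x):=\|Ax\|/\|x\|$ is independent of $x$. I would fix an arbitrary two dimensional subspace $\Pi=\langle x,y\rangle$: by Proposition~\ref{two} the restriction $f|_\Pi\colon\Pi\to\Pi'=\langle Ax,Ay\rangle$ is a surjective phase-isometry between planes, so \cite[Theorem~9]{WB} gives $f|_\Pi=\sigma_\Pi V_\Pi$ for a linear isometry $V_\Pi$ of $\Pi$ onto $\Pi'$. Since $\tau(v)Av=f(v)=\pm V_\Pi v$ for every $v\in\Pi$, the vectors $Av$ and $V_\Pi v$ are parallel for each $v$, whence $A|_\Pi=c_\Pi V_\Pi$ for a scalar $c_\Pi$ (two linear maps on a plane whose images are everywhere parallel are proportional, as one sees by testing on a basis and on its sum). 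Consequently $\rho\equiv|c_\Pi|$ on $\Pi$. As any two nonzero vectors span a subspace of dimension one or two, $\rho$ is a global constant $c>0$. Then $U:=c^{-1}A$ is a linear isometry, surjective because $f(X)=Y$ forces $A(X)=Y$, and $f(x)=\tau(x)Ax=(c\tau(x))Ux$ with $|c\tau(x)|=c\,\rho(x)^{-1}=1$; thus $f=\sigma U$ with $\sigma(x)=c\tau(x)\in\{-1,1\}$.

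I expect the main obstacle to be exactly this last magnitude control. The fundamental theorem of projective geometry recovers the linear map $A$ only up to direction, so a priori the phase could absorb a direction-dependent rescaling of the norm; ruling this out is precisely what forces the descent to two dimensional subspaces and the use of the already established planar case to pin the ratio $\rho$ to a single constant.
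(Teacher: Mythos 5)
Your proposal is correct, and its skeleton (split by $\dim X$, the one- and two-dimensional cases, and the reduction of the case $\dim X\geq 3$ to the fundamental theorem of projective geometry via Propositions \ref{hom} and \ref{two}) matches the paper's proof; your verification of hypothesis (i) of Theorem \ref{projective} via injectivity of $f$ is a harmless variant of the paper's dimension-counting argument. Where you genuinely diverge is in the final ``magnitude control'' step, which you correctly identify as the crux. The paper pins down the modulus of the scalars by invoking Lemma \ref{lin ind}(ii): $f$, hence the linear map $A$, preserves Birkhoff--James orthogonality, and a linear map between normed spaces preserving this orthogonality is a scalar multiple of an isometry by \cite{Blanco,Koldobsky}. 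You instead restrict $f$ to each plane $\Pi$, apply the planar result of \cite{WB} to write $f|_\Pi=\sigma_\Pi V_\Pi$ with $V_\Pi$ a linear isometry onto $f(\Pi)$, deduce $A|_\Pi=c_\Pi V_\Pi$ from the everywhere-parallel condition, and patch the constants $|c_\Pi|$ over overlapping planes. Both arguments are sound. Yours has the advantage of eliminating the dependence on the orthogonality-preserver theorems of \cite{Blanco} and \cite{Koldobsky} (and renders Lemma \ref{lin ind}(ii) unnecessary), since \cite{WB} is already needed for the two-dimensional case; the paper's route is shorter on the page because it outsources the analytic work to a single known theorem rather than running a plane-by-plane proportionality and patching argument.
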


\begin{proof}
Let $\dim X\geq 3$. Because $f(\lambda x)=\pm\lambda f(x)$, $\lambda\in\mathbb{R}$ and $x\in X$, the mapping $\tilde{f}:\mathbb{P}X\to\mathbb{P}Y$, $\tilde{f}(\langle x\rangle)=\langle f(x)\rangle$ is well defined. From Proposition \ref{two} it follows that condition (ii) of Theorem \ref{projective} is satisfied. To see that (i) also holds suppose that the range $f(X)$ has dimension 2.
Then the range $f^{-1}(f(X))=X$ by Proposition \ref{two} also has dimension 2, a contradiction.
Therefore, by Theorem \ref{projective}  there exists an injective linear mapping $A \colon X\to Y$ such that
$$f(x)=\lambda(x) Ax.$$
By Lemma \ref{lin ind}(ii), $f$ preserves Birkhoff--James orthogonality (which is homogeneous), so $A$ preserves Birkhoff-James orthogonality as well.
Then $A$ is a scalar multiple of an isometry, see \cite{Blanco,Koldobsky}.
Write $A=\mu U$ for some $\mu \in \mathbb{R}$ and isometry $U \colon X \to Y$.
Since $f$ is norm preserving, $\vert \lambda(x)\mu \vert =1$, hence $\lambda(x)\mu = \pm 1$.
It remains to define $\sigma(x)=\lambda(x)\mu$ to get $f(x)=\sigma(x)Ux$.
Note that $U$ is surjective because $f$ is surjective.

If $\dim{X}=2$, then the result is proved in \cite{WB}.	

If $\dim X=1$, the result is proved in \cite[Proposition 2.2]{Huang2}, or we can give a short proof. Fix a unit vector $x_0\in X$. Let $\lambda\in\mathbb{R}$ and define $U(\lambda x_0)=\lambda f(x_0)$.
Then Proposition \ref{hom} implies $f(\lambda x_0)=\pm\lambda f(x_0)=\pm U(\lambda x_0)$. The mapping $U \colon X \to Y$ is linear and surjective. Lemma \ref{lemma preliminaries}(i) implies that $U$ is an isometry.

The proof is complete.
\end{proof}

\end{document}